\documentclass[reqno]{amsart}
\usepackage{amsmath}
\usepackage{amsthm}
\usepackage{amssymb}
\usepackage{stmaryrd}
\usepackage{afterpage}
\usepackage{rotating}    
\usepackage{adjustbox}   
\usepackage{booktabs}
\usepackage[utf8]{inputenc}
\usepackage[T1]{fontenc}
\usepackage{lmodern}
\usepackage{eucal}
\pdfoutput=1 
\usepackage{microtype}
\usepackage{url}
\usepackage{enumitem}
\usepackage{tikz}
\usetikzlibrary{arrows.meta}
\usepackage{tikz-cd}
\usepackage{bbm}
\usepackage{longtable,booktabs}
\usepackage[all]{xy}
\usepackage{leftindex}
\usepackage{float}
\usepackage{caption}
\usepackage{relsize}

\usepackage[letterpaper, left = 1.5in, right = 1.5in, top = 1.5in, bottom = 1.5in]{geometry}

\usepackage{pdflscape}

\usepackage{hyperref}

\usepackage{xcolor}
\hypersetup{
    colorlinks,
    linkcolor={red!50!black},
    citecolor={blue!50!black},
    urlcolor={blue!80!black}
}
\definecolor{chartgray}{gray}{0.4}
\definecolor{darkcyan}{rgb}{0, 0.7, 0.7}
\definecolor{darkblue}{rgb}{0, 0.65, 0}
\definecolor{truemagenta}{rgb}{1, 0, 1}

\usepackage{graphicx}

\usepackage{lipsum}     
\usepackage{booktabs} 
\usepackage{rotating}
\usepackage{tabularx}

\usepackage[colorinlistoftodos]{todonotes}

\usepackage[capitalise,nameinlink]{cleveref}
\crefname{section}{Section}{Sections}
\crefname{subsection}{Subsection}{Subsections}

\theoremstyle{definition}
\newtheorem{theorem}{Theorem}[section]

\newtheorem{prop}[theorem]{Proposition}

\crefname{prop}{Proposition}{Propositions}
\Crefname{prop}{Proposition}{Propositions}

\newtheorem{lemma}[theorem]{Lemma}

\crefname{lemma}{Lemma}{Lemmas}
\Crefname{lemma}{Lemma}{Lemmas}

\newtheorem{cor}[theorem]{Corollary}

\crefname{cor}{Corollary}{Corollaries}
\Crefname{cor}{Corollary}{Corollaries}

\crefname{ex}{Example}{Examples}
\Crefname{ex}{Example}{Examples}

\crefname{construction}{Construction}{Constructions}
\Crefname{construction}{Construction}{Constructions}

\newtheorem{rmk}[theorem]{Remark}

\newtheorem{construction}[theorem]{Construction}

\newtheorem*{rmk*}{Remark}
\newtheorem*{ex*}{Example}
\newtheorem*{theorem*}{Theorem}
\newtheorem*{defn*}{Definition}
\newtheorem*{question*}{Question}

\crefformat{questionmanual}{#2Question#3}
\Crefformat{questionmanual}{#2Question#3}

\newcommand{\bfh}{\mathbf{h}}

\newcommand{\bbZ}{\mathbb{Z}}

\newcommand{\bbF}{\mathbb{F}}

\newcommand{\bbR}{\mathbb{R}}

\newcommand{\calA}{\mathcal{A}}

\newcommand{\Res}{\operatorname{Res}}

\newcommand{\Ext}{\operatorname{Ext}}
\newcommand{\Cof}{\operatorname{Cof}}

\newcommand{\im}{\operatorname{im}}

\newcommand{\id}{\operatorname{id}}

\newcommand\xqed[1]{%
  \leavevmode\unskip\penalty9999 \hbox{}\nobreak\hfill
  \quad\hbox{#1}}
\newcommand\tqed{\xqed{$\triangleleft$}}

\makeatletter
\DeclareRobustCommand{\tvdots}{%
  \vbox{\baselineskip4\p@\lineskiplimit\bbZ@\kern0\p@\hbox{.}\hbox{.}\hbox{.}}}
\makeatother

\makeatletter
\@namedef{subjclassname@2020}{\textup{2020} Mathematics Subject Classification}
\makeatother

\makeatletter
\newcommand{\raisemath}[1]{\mathpalette{\raisem@th{#1}}}
\newcommand{\raisem@th}[3]{\raisebox{#1}{$#2#3$}}
\makeatother

\begin{document}

\title[The $RO(C_2)$-graded Hurewicz image of Atiyah's Real K-theory]{The $RO(C_2)$-graded Hurewicz image of Atiyah's Real K-theory}

\author[Sihao Ma]{Sihao Ma}
\address{Department of Mathematics, UCLA, Los Angeles, CA 90095-1555, USA}
\email{masihao@math.ucla.edu}

\author{XiaoLin Danny Shi}
\address{Department of Mathematics, University of Washington, Seattle, WA 98195, USA}
\email{dannyshi@uw.edu}

\author{Zhouli Xu}
\address{Department of Mathematics, UCLA, Los Angeles, CA 90095-1555, USA}
\email{xuzhouli@ucla.edu}

\author{Albert Jinghui Yang}
\address{Department of Mathematics, University of Pennsylvania, Philadelphia, PA 19104, USA}
\email{yangjh@sas.upenn.edu}

\author{Shangjie Zhang}
\address{Department of Mathematics, University of Washington, Seattle, WA 98195, USA}
\email{sjzh@uw.edu}
\subjclass[2020]{
19L47, 
55N91, 
55Q91. 
}

\begin{abstract}
  We determine the $RO(C_2)$-graded $k\bbR$-Hurewicz image, where $k \bbR$ denotes the connective Atiyah's Real $K$-theory spectrum. As a consequence, we obtain four infinite families in the $RO(C_2)$-graded equivariant stable stems that are $C_2$-equivariant refinements of the classical $\mu$-families. Using a $C_2$-equivariant $v_{1,0}$-self map, we explicitly construct an equivariant Adams periodicity operator $P_{C_2}$ that captures these $v_1$-periodicity phenomena in the $C_2$-stable stems.
\end{abstract}

\maketitle
\tableofcontents

\section{Introduction}

In their landmark paper \cite{hillhopkinsravenel2016nonexistence}, Hill, Hopkins, and Ravenel resolved the longstanding Kervaire invariant problem by showing that the elements $\theta_j$ do not exist for $j\geq 7$. A central ingredient in their proof is the construction and analysis of the norm spectrum  $N_{C_2}^{G}MU_{\bbR}$ for a cyclic 2-group $G$.  Here, $MU_{\bbR}$ is the $C_2$-equivariant \emph{Real bordism spectrum}, which originated in the work of Landweber, Fujii, and Araki \cite{landweber1968conjugations, fujii1975cobordism, araki1979orientations}.  When $G=C_2$, $MU_\bbR$ and its 2-local summand $BP_\bbR$ were analyzed by Hu and Kriz, who obtained structural and computational results for these spectra \cite{hukriz2001real}.

From the perspective of chromatic homotopy theory, the height-one truncation of the $2$-local Real Brown--Peterson spectrum $BP_{\bbR} \langle 1 \rangle$ is equivalent to the $C_2$-equivariant spectrum $k\bbR$, the connective cover of \emph{Atiyah's Real $K$-theory} $K\bbR$ that classifies Real vector bundles \cite{atiyah1966ktheory}. These two spectra provide a natural bridge between real and complex $K$-theories and a conceptual framework for the study of real elliptic operators and real algebraic geometry \cite{atiyah1966ktheory, dugger2005atiyah, greenleesmeier2017gorenstein}. More precisely,
\[k\bbR^{C_2} \simeq  ko,\quad  \Res^{C_2}_e(k\bbR) \simeq ku,\]
and $k\bbR$ and $K\bbR$ are related by inverting the height-one periodicity class $\overline{v}_1$. In particular, $K \bbR$ exhibits $RO(C_2)$-graded periodicities that refine both the complex and the real Bott periodicities. As such, they capture height-one chromatic information in the $C_2$-equivariant stable homotopy category and serve as basic examples in the study of $C_2$-equivariant chromatic phenomena. For recent work in this direction, see \cite{barthelhausmannnaumannnikolausnoelstapleton2019balmer, kuhnlloyd2022chromatic, lilormanquigley2022tate, behrenscarlisle2025periodic, lipeteresentatum2026spectrum}.

Real-oriented spectra are also computationally tractable objects for detecting classes in the $C_2$-equivariant sphere. More generally, for a ring spectrum $E$, the classical \emph{Hurewicz-image problem} asks for the image of the unit map 
\[
    \pi_*S^0\longrightarrow \pi_*E.
\]
The idea of detecting stable homotopy classes by computationally accessible homology theories is central to many developments in stable homotopy theory, such as the Detection Theorem of Hill--Hopkins--Ravenel \cite{hillhopkinsravenel2016nonexistence}. Other notable work includes Hurewicz-image computations for topological modular forms \cite{behrenshillhopkinsmahowald2020detecting,  brunerrognes2021adams, behrensmahowaldquigley2023hurewicz, bhattacharya2024new}, and for the integer-graded Real bordism theory and Real Johnson--Wilson theories \cite{lishiwangxu2019hurewicz, hahnshi2020real}. 

In recent joint work with Guo, Li, Lu, and Wu \cite{gllmwxyz2026hurewicz}, we extended this line of investigation to the $C_2$-equivariant setting by completely determining the $RO(C_2)$-graded Hurewicz images of the Eilenberg--MacLane spectra $H\underline{\mathbb F_2}$, $H\underline{\mathbb Z}$ and $H\underline{A}$. Here $\underline{\bbF_2}$ and $\underline{\bbZ}$ denote the constant Mackey functors with values in $\bbF_2$ and $\bbZ$, respectively, and $\underline{A}$ denotes the Burnside Mackey functor. Classically, the $H\bbF_2$- and $H\bbZ$-Hurewicz images are simply generated by the unit and concentrated in degree zero. It turns out that, equivariantly, $H\underline{\bbF_2}$ and $H\underline{\bbZ}$ detect infinitely many nontrivial classes at various $RO(C_2)$-degrees, and the pattern is closely tied to the classical problem of vector fields on spheres in differential topology \cite{adams1962vector}.

In this paper, we extend the study of the $RO(C_2)$-graded equivariant Hurewicz-image problem initiated in \cite{gllmwxyz2026hurewicz}: we determine the full $RO(C_2)$-graded Hurewicz image of Atiyah's Real $K$-theory. We focus on $k\bbR$, which has the advantage of admitting a non-nilpotent Euler class $\rho$ and an infinite $\rho$-divisible pattern in the negative cone, while retaining the $\bar{v}_1$-periodic pattern of $K\bbR$. In particular, the Hurewicz image of $K\bbR$ can be read off from our computations below.

Recall that the real representation ring of $C_2$ is $RO(C_2) \cong \bbZ[\sigma]/(\sigma^2-1)$, where $\sigma$ denotes the sign representation. We use the bidegree $(s, w)$ to indicate the $RO(C_2)$-degree $(s-w)+w\sigma$. We refer to $s$ as the \textit{stem}, $w$ as the \textit{weight}, and $s-w$ as the \textit{coweight}. Let $S^{0,0}$ denote the $C_2$-equivariant sphere spectrum, and let $S^0$ denote the non-equivariant sphere spectrum.

\begin{theorem}[\cref{thm:krhurewicz}] \label{thm:mainhurewicz}
    The $RO(C_2)$-graded image of the Hurewicz map \[\iota_{k\bbR}: S^{0,0} \longrightarrow k\bbR\] consists of
    \[\im (\iota_{k\bbR} )_{s, w}=\begin{cases}
        \bbF_2 \quad &\mathrm{if}\ s=w<0,\\
        \bbZ \quad &\mathrm{if}\ s=0\ \mathrm{and}\ w=2k\ \mathrm{for}\ k \in \bbZ,\\

        \bbF_2 \quad &\mathrm{if}\ s=w=8k+6+\epsilon, \ \ \mathrm{for}\ k\geq 0, \epsilon =0, 1,\\
        
        \bbF_2& \mathrm{if}\ s=8l+\epsilon\ \mathrm{and}\ s-w=4k+1 \ \ \mathrm{for}\ k\in \bbZ, l\geq 0, \epsilon =0, 1, \\
        
        \bbF_2 & \mathrm{if}\ s=8l+2 \ \mathrm{and}\ s-w=4k+2\ \  \mathrm{for}\ k\in \bbZ, l\geq 0, \\
       
        \bbF_2 \quad &\mathrm{if}\ 0\leq s< \psi(w-s-1) \ \mathrm{and}\ s-w=4k-1\ \  \mathrm{for\ } k\leq -1,\\
        0\quad &\mathrm{otherwise. \rlap{\hspace{18.5em} \tqed}}
    \end{cases}\]
\end{theorem}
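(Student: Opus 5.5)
The plan is to compare the $C_2$-equivariant Adams spectral sequence for $S^{0,0}$ with the one for $k\bbR$, and to do the bookkeeping coweight by coweight.

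\textbf{Target computation.} First I will write down $\pi_{\star}k\bbR$ completely. The input is the $\rho$-Bockstein (equivalently, the slice) spectral sequence for $k\bbR = BP_\bbR\langle 1\rangle$, whose $E_2$-page is $\bbZ_{(2)}[\rho, \bar v_1, a^{\pm}\ldots]$-type data coming from $H\underline{\bbZ}_\star$. The only differential pattern is $d_3(u^2_\sigma)=\rho^3\bar v_1$ and its periodic translates. In the bidegrees $(s,w)$ the result separates into the following pieces:
\begin{itemize}
\item the $\bbZ$'s in stem $0$ at even weights (the classes $u^{2k}_{2\sigma}$-type, including $a$-multiples for negative weights);
\item the $\rho$-towers $\bbF_2$ on the line $s=w<0$;
\item the $\eta$-patterns $\bbF_2\{\eta,\eta^2\}$ and their $\bar v_1^4$ translates in stems $8l+1,8l+2$;
\item the negative-cone $\rho$-divisible families in coweights $\equiv 3 \pmod 4$.
\end{itemize}

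\textbf{Detected classes.} Next I will exhibit elements of $\pi_\star S^{0,0}$ hitting each claimed class.
\begin{itemize}
\item The classes $\rho^n$ give $s=w<0$.
\item Stem $0$ is handled by the Burnside ring. The element $[C_2]$ and its combinations with $\rho^{-}$-type Euler-class divisions map onto the $\bbZ$'s in weight $2k$, using that the unit of $\underline{A}\to\underline{\bbZ}$ hits these after the comparison with $H\underline{\bbZ}$ from \cite{gllmwxyz2026hurewicz}.
\item The classes $\eta_{C_2}$ and $\eta_{C_2}^2$, multiplied by the equivariant $v_1^4$-periodic families (the $C_2$-refinements of the $\mu$-family, obtained via the $v_{1,0}$-self map and the operator $P_{C_2}$), give the coweight $4k+1$ and $4k+2$ lines at stems $8l+\epsilon$ and $8l+2$.
\item The classes at $s=w=8k+6,8k+7$ come from the $\rho$-Bockstein images of $\mu$-type elements in the positive cone.
\item For the region $0\le s<\psi(w-s-1)$ with coweight $4k-1$, I will use the geometric fixed points. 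Under $\Phi^{C_2}$, the negative-cone classes of $k\bbR$ correspond to $\rho$-inverted data. The Mahowald/Bredon–Landweber identification $\pi_\star S^{0,0}[\rho^{-1}]\cong\pi_*S^0$ then reduces detection to stunted projective spaces $\mathbb{R}P^{\infty}_{-n}$. There, the image of $J$-type classes in $ko$-homology is governed by the vector-field number $\psi$, exactly as in the $H\underline{\bbZ}$ case of \cite{gllmwxyz2026hurewicz}.
\end{itemize}

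\textbf{Vanishing.} For all other targets I will argue that they are not hit. I will use the naturality of $\Phi^{C_2}$ and of the underlying map $\Res^{C_2}_e$, where $\pi_*S^0\to\pi_* ku$ is known to be only the unit. I will also use the $C_2$-Adams filtration: a class of $k\bbR$ in high Adams filtration relative to the corresponding vanishing line of the sphere's $E_2$-page cannot be in the image. For the negative cone specifically, the bound $s<\psi(\cdot)$ comes from the known Adams vanishing/James periodicity for $\mathbb{R}P^\infty_{-n}$.

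\textbf{Main obstacle.} I expect the hardest step to be the sharp upper bound $\psi(w-s-1)$ in coweight $4k-1$. This requires showing that the $\rho$-divisibility of sphere classes stops exactly where $k\bbR$'s $\rho$-divisible family continues. My approach would be to first reduce via the cofiber sequence of $\rho$ to the classical statement that the $ko$-Hurewicz image of $\mathbb{R}P_{-n}$ is controlled by $J$-homomorphism data, and then invoke Adams' vector fields result.
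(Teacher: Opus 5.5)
Your overall plan (detection, then non-detection coweight by coweight, with the $H\underline{\bbZ}$ theorem imported) matches the paper's, but both halves have genuine gaps.

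\textbf{Detection in coweights $\equiv 1,2 \pmod 4$.} The theorem needs, at each fixed stem $8l+\epsilon$ (resp.\ $8l+2$), a nonzero class in \emph{every} coweight $4k+1$ (resp.\ $4k+2$), $k\in\bbZ$. These are the positive-cone classes $\rho^{2-\epsilon}\tau^{4k}\bar v_1^{4l+1}$ and $\rho^{2}\tau^{4k}\bar v_1^{4l+2}$ for all $k\ge 0$, and negative-cone classes such as $\frac{\theta}{\rho^\epsilon\tau^{4j+1}}\bar v_1^{4l}$ on the other side.

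Your mechanism reaches only one coweight per stem. $P_{C_2}$ comes from a self map of degree $(8,4)$, so $P_{C_2}^l\{\tau h_1\}$ lies only in $(8l+1,4l)$, with image $\rho\bar v_1^{4l+1}$. None of the available multiplications helps:
\begin{itemize}
\item Multiplying by $\eta_{C_2}\in\pi^{C_2}_{1,1}$ does not change coweight. In fact $\eta_{C_2}$ has zero image, since $\pi^{C_2}_{1,1}k\bbR=0$; the relevant seed is $\{\tau h_1\}\in\pi^{C_2}_{1,0}$.
\item The stem-zero classes have images $2\tau^{2k}$ or $\frac{\theta}{\tau^{2k-2}}$, so they annihilate the 2-torsion seeds because $2\rho=0=\rho\theta$.
\item No element of $\pi^{C_2}_\star S^{0,0}$ maps to $\tau^4$, since the image in degree $(0,-4)$ is generated by $2\tau^4$.
\end{itemize}

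The missing idea is a coweight-shifting device. The paper uses the $\rho$-Bockstein differentials $d_2(\tau^{4k+2})=\rho^2\tau^{4k+1}h_1$ and $d_3(\tau^{4k+2}x_{8k-1})=\rho^3\tau^{4k+1}P^kh_1$ to see that $\{\rho^\epsilon\tau^{4l+1}P^lh_1\}$ is $a_\sigma^3$-torsion. It therefore lifts to $\Sigma^{2,3}S^{0,0}/a_\sigma^3$. There one applies the Behrens--Shah $\tau^4$-self-equivalence of $S^{0,0}/a_\sigma^3$ with any positive or negative exponent, then projects back to $S^{0,0}$. Naturality against $k\bbR/\rho^3$ identifies the images as the $\tau^{\pm 4}$-translates, including those landing in the negative cone. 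Coweight $\equiv 2$ then follows by multiplying with $\{\tau h_1\}$.

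Your description of $\pi_\star k\bbR$, with only $\bar v_1^4$-translates of $\bbF_2\{\eta,\eta^2\}$ in stems $8l+1,8l+2$, omits exactly these translates. Likewise, your appeal to $\rho$-Bockstein images of $\mu$-type elements is not an argument for the coweight-zero classes at $s=w=8k+6+\epsilon$. The paper identifies their $ko_{C_2}$-preimages as $\eta^{4k+4}/\rho^{4k+2+\epsilon}$ and invokes Guillou--Isaksen's computation of the Bredon--Landweber region.

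\textbf{Non-detection.} Restriction and geometric fixed points are blind to most of the 2-torsion classes you must exclude. Every $\rho$-multiple restricts to zero. Every $\bar v_1$-multiple and every negative-cone class is $\rho$-power torsion ($\rho^3\bar v_1=0$, $\rho\theta=0$), so it dies in $\Phi^{C_2}$. For example, $\rho\tau^{4k}\bar v_1^{4l+2}$ and $\rho^{1+\epsilon}\tau^{4k}\bar v_1^{4l}$ with $l\ge 1$ are invisible to both. Restriction does suffice for the torsion-free classes in positive stems; there the paper instead uses the rank results of Greenlees--Quigley and Belmont--Xu--Zhang.

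Your Adams-filtration argument runs backwards. Maps do not lower filtration, so $\iota(x)=y$ only forces $\mathrm{AF}(x)\le \mathrm{AF}(y)$, and a vanishing line for the sphere cannot stop a high-filtration $y$ from being hit.

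What the paper does instead is factor $\iota_{k\bbR}$ through $c\colon ko_{C_2}\to k\bbR$ and use the Guillou--Hill--Isaksen--Ravenel computation of $\pi_\star ko_{C_2}$ and $c_\star$. This gives:
\begin{itemize}
\item vanishing of $\pi_\star ko_{C_2}$ in the relevant degrees of coweight $\equiv 0,3$;
\item classes lying outside the image of $c_\star$;
\item non-$\rho$-divisibility of the preimages $\tau^{4k+2}h_1^2(\tau^4\beta)^l$, which excludes $\rho\tau^{4k}\bar v_1^{4l+2}$;
\item torsion-freeness of the preimages $\rho^{1+\epsilon}\tau^{4(k+l)}\beta^l$.
\end{itemize}

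\textbf{The $\psi(w-s-1)$ region.} This is not the real obstacle, and your route to it fails. Since $\rho\theta=0$, the negative-cone classes vanish after inverting $\rho$, so $\Phi^{C_2}$ sees nothing there. The paper reads this region off from the $H\underline{\bbZ}$-Hurewicz theorem via $S^{0,0}\to k\bbR\to H\underline{\bbZ}$, since the $\rho$-divisible negative-cone pattern of $k\bbR$ maps isomorphically onto that of $H\underline{\bbZ}$.
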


\begin{figure}[p]
        \centering
        \begin{tikzpicture}[scale=0.7135]
\draw[thick,dashed, ->] (-6,0) -- (10.5,0);
\draw[thick,dashed, ->] (0,-10) -- (0, 16);

\node at (-5, 12){$\bullet= \bbF_2$};
\node at (-5, 11){$\square$ $= \bbZ$};
\draw (-5.7, 12.8)--(-5.2, 13.3);
\node at (-4.5, 13){$=\cdot \rho$};
\draw [red](-5.7, 13.9)--(-5.1, 14.2);
\node at (-4.5, 14){$=\cdot \bar{v}_1$};

\node at (11,0){s};
\node at (0, 16.5){w};

\node at (-0.3, 0.3)[scale=0.75]{0};
\node at (-0.3, 2)[scale=0.75]{2};
\node at (-0.3, 4)[scale=0.75]{4};
\node at (-0.3, 6)[scale=0.75]{6};
\node at (-0.3, 8)[scale=0.75]{8};
\node at (-0.4, 10)[scale=0.75]{10};
\node at (-0.4, 12)[scale=0.75]{12};
\node at (-0.4, 14)[scale=0.75]{14};
\node at (0.3, -2)[scale=0.75]{-2};
\node at (0.3, -4)[scale=0.75]{-4};
\node at (0.3, -6)[scale=0.75]{-6};
\node at (0.3, -8)[scale=0.75]{-8};
\node at (0.4, -10)[scale=0.75]{-10};

\node at (2, -0.3)[scale=0.75]{2};
\node at (4, -0.3)[scale=0.75]{4};
\node at (6, -0.3)[scale=0.75]{6};
\node at (8, -0.3)[scale=0.75]{8};
\node at (10, -0.3)[scale=0.75]{10};
\node at (-2, 0.3)[scale=0.75]{-2};
\node at (-4, 0.3)[scale=0.75]{-4};
\node at (-6, 0.3)[scale=0.75]{-6};

\foreach \z in {0, 1, 2, 3, 4, 5}
		\draw (\z+\z, \z-2) node{$\square$};

\foreach \z in {0, 1, 2, 3, 4, 5}
		\draw (\z+\z, \z-4) node{$\square$};

\foreach \z in {0, 1, 2, 3, 4, 5}
		\draw (\z+\z, \z-6) node{$\square$};

\foreach \z in {0, 1, 2, 3, 4, 5}
		\draw (\z+\z, \z-8) node{$\square$};

\foreach \z in {0, 1, 2, 3, 4, 5}
		\draw (\z+\z, \z-10) node{$\square$};

\foreach \z in { 2, 3, 4, 5}
		\draw (\z+\z, \z-12) node{$\square$};

\foreach \z in { 4, 5}
		\draw (\z+\z, \z-14) node{$\square$};

\foreach \z in {0, 1, 2, 3, 4, 5}
		\draw (\z+\z, \z) node{$\square$};

\foreach \z in {0, 1, 2, 3, 4, 5}
		\draw (\z+\z, \z+2) node{$\square$};

\foreach \z in {0, 1, 2, 3, 4, 5}
		\draw (\z+\z, \z+4) node{$\square$};

\foreach \z in {0, 1, 2, 3, 4, 5}
		\draw (\z+\z, \z+6) node{$\square$};

\foreach \z in {0, 1, 2, 3, 4, 5}
		\draw (\z+\z, \z+8) node{$\square$};

\foreach \z in {0, 1, 2, 3, 4, 5}
		\draw (\z+\z, \z+10) node{$\square$};

\foreach \z in {0, 1, 2, 3, 4}
		\draw (\z+\z, \z+12) node{$\square$};

\foreach \z in {0, 1, 2}
		\draw (\z+\z, \z+14) node{$\square$};

\node at (0.4, 1.7)[scale=0.75]{$\theta$};

\foreach \z in {1, 2, 3, 4, 5}
		\draw (\z+\z-1, \z+3) node{$\bullet$};
\foreach \z in {1, 2, 3, 4, 5, 6}
		\draw (\z+\z-2, \z+2) node{$\bullet$};

\node at (0,5){$\bullet$};
\node at (1,6){$\bullet$};

\foreach \z in {2,3, ...,10}
		\draw (\z-0.2, \z+5.25) node{$\bullet$};

\foreach \z in {1, 2, 3, 4, 5}
		\draw (\z+\z-1, \z+7) node{$\bullet$};
\foreach \z in {1, 2, 3, 4, 5, 6}
		\draw (\z+\z-2, \z+6) node{$\bullet$};

\node at (0,9){$\bullet$};
\node at (1,10){$\bullet$};
\foreach \z in {2,3, ...,7}
		\draw (\z-0.2, \z+9.25) node{$\bullet$};

\foreach \z in {1, 2, 3, 4, 5}
		\draw (\z+\z-1, \z+11) node{$\bullet$};
\foreach \z in {1, 2, 3, 4, 5, 6}
		\draw (\z+\z-2, \z+10) node{$\bullet$};

\node at (0,13){$\bullet$};
\node at (1,14){$\bullet$};
\node at (1.8,15.25){$\bullet$};
\node at (2.8,16.25){$\bullet$};

\node at (0,15){$\bullet$};
\node at (1,16){$\bullet$};
\node at (2,16){$\bullet$};

\node at (0.3, 2.7)[scale=0.75]{$\frac{\theta}{\tau}$};
\node at (0.3, 3.7)[scale=0.75]{$\frac{\theta}{\tau^2}$};
\node at (0.3, 4.7)[scale=0.75]{$\frac{\theta}{\tau^3}$};
\node at (0.3, 5.7)[scale=0.75]{$\frac{\theta}{\tau^4}$};
\node at (0.3, 6.7)[scale=0.75]{$\frac{\theta}{\tau^5}$};
\node at (0.3, 7.7)[scale=0.75]{$\frac{\theta}{\tau^6}$};
\node at (0.3, 8.7)[scale=0.75]{$\frac{\theta}{\tau^7}$};
\node at (0.3, 9.7)[scale=0.75]{$\frac{\theta}{\tau^8}$};
\node at (0.3, 10.7)[scale=0.75]{$\frac{\theta}{\tau^9}$};
\node at (0.3, 11.7)[scale=0.75]{$\frac{\theta}{\tau^{10}}$};
\node at (0.3, 12.7)[scale=0.75]{$\frac{\theta}{\tau^{11}}$};
\node at (0.3, 13.7)[scale=0.75]{$\frac{\theta}{\tau^{12}}$};
\node at (0.3, 14.7)[scale=0.75]{$\frac{\theta}{\tau^{13}}$};

\foreach \z in {0, 1, 2, 3, 4, 5}
		\draw (\z+\z-1, \z-1) node{$\bullet$};
\foreach \z in {0, 1, 2, 3, 4, 5, 6}
		\draw (\z+\z-2, \z-2) node{$\bullet$};
\node at (-3,-3){$\bullet$};
\node at (-4,-4){$\bullet$};
\node at (-5,-5){$\bullet$};
\node at (-6,-6){$\bullet$};

\node at (-1.2, -0.7)[scale=0.75]{$\rho$};
\node at (-2.2, -1.7)[scale=0.75]{$\rho^2$};
\node at (-3.2, -2.7)[scale=0.75]{$\rho^3$};
\node at (-4.2, -3.7)[scale=0.75]{$\rho^4$};
\node at (-5.2, -4.7)[scale=0.75]{$\rho^5$};
\node at (-6.2, -5.7)[scale=0.75]{$\rho^6$};

\foreach \z in {0, 1, 2, 3, 4, 5}
		\draw (\z+\z-1, \z-5) node{$\bullet$};
\foreach \z in {0, 1, 2, 3, 4, 5, 6}
		\draw (\z+\z-2, \z-6) node{$\bullet$};
\node at (-3,-7){$\bullet$};
\node at (-4,-8){$\bullet$};
\node at (-5,-9){$\bullet$};
\node at (-6,-10){$\bullet$};

\foreach \z in {0, 1, 2, 3, 4, 5}
		\draw (\z+\z-1, \z-9) node{$\bullet$};
\foreach \z in {0, 1, 2, 3, 4, 5,6}
		\draw (\z+\z-2, \z-10) node{$\bullet$};

\foreach \z in { 3, 4, 5}
		\draw (\z+\z-1, \z-13) node{$\bullet$};
\foreach \z in { 4, 5,6}
		\draw (\z+\z-2, \z-14) node{$\bullet$};

\node at (-0.4, -1.6)[scale=0.75]{$2\tau^2$};
\node at (-0.4, -3.6)[scale=0.75]{$\tau^4$};
\node at (-0.4, -5.6)[scale=0.75]{$2\tau^6$};
\node at (-0.4, -7.6)[scale=0.75]{$\tau^8$};
\node at (-0.4, -9.6)[scale=0.75]{$2\tau^{10}$};

\foreach \z in {0, 1, 2, 3, 4}
		\draw [red](\z+\z+0.15, \z+0.1) -- (\z+\z+1.85,\z+0.9);
\foreach \z in {0, 1, 2, 3, 4}
		\draw [red](\z+\z+0.15, \z-1.9) -- (\z+\z+1.85,\z-1.1);
\foreach \z in {0, 1, 2, 3, 4}
		\draw [red](\z+\z+0.15, \z-3.9) -- (\z+\z+1.85,\z-3.1);
\foreach \z in {0, 1, 2, 3, 4}
		\draw [red](\z+\z+0.15, \z-5.9) -- (\z+\z+1.85,\z-5.1);
\foreach \z in {0, 1, 2, 3, 4}
		\draw [red](\z+\z+0.15, \z-7.9) -- (\z+\z+1.85,\z-7.1);
\foreach \z in {0, 1, 2, 3, 4}
		\draw [red](\z+\z+0.15, \z-9.9) -- (\z+\z+1.85,\z-9.1);
\foreach \z in {2, 3, 4}
		\draw [red](\z+\z+0.15, \z-11.9) -- (\z+\z+1.85,\z-11.1);
\foreach \z in {4}
		\draw [red](\z+\z+0.15, \z-13.9) -- (\z+\z+1.85,\z-13.1);

\foreach \z in {0, 1, 2, 3, 4}
		\draw [red](\z+\z+0.15, \z+2.1) -- (\z+\z+1.85,\z+2.9);
\foreach \z in {0, 1, 2, 3, 4}
		\draw [red](\z+\z+0.15, \z+4.1) -- (\z+\z+1.85,\z+4.9);
\foreach \z in {0, 1, 2, 3, 4}
		\draw [red](\z+\z+0.15, \z+6.1) -- (\z+\z+1.85,\z+6.9);
\foreach \z in {0, 1, 2, 3, 4}
		\draw [red](\z+\z+0.15, \z+8.1) -- (\z+\z+1.85,\z+8.9);
\foreach \z in {0, 1, 2, 3, 4}
		\draw [red](\z+\z+0.15, \z+10.1) -- (\z+\z+1.85,\z+10.9);
\foreach \z in {0, 1, 2, 3}
		\draw [red](\z+\z+0.15, \z+12.1) -- (\z+\z+1.85,\z+12.9);
\foreach \z in {0, 1}
		\draw [red](\z+\z+0.15, \z+14.1) -- (\z+\z+1.85,\z+14.9);

\foreach \z in {0, 1, 2, 3, 4}
		\draw [red](\z+\z-1, \z-1) -- (\z+\z+1,\z);
        \draw [red](9, 4) -- (10, 4.5);
\foreach \z in {0, 1, 2, 3, 4, 5}
		\draw [red](\z+\z-2, \z-2) -- (\z+\z,\z-1);
\foreach \z in {0, 1, 2, 3, 4}
		\draw [red](\z+\z-1, \z-5) -- (\z+\z+1,\z-4);
        \draw [red](9, 0) -- (10, 0.5);
\foreach \z in {0, 1, 2, 3, 4, 5}
		\draw [red](\z+\z-2, \z-6) -- (\z+\z,\z-5);
\foreach \z in {0, 1, 2, 3, 4}
		\draw [red](\z+\z-1, \z-9) -- (\z+\z+1,\z-8);
        \draw [red](9, -4) -- (10, -3.5);
\foreach \z in {0, 1, 2, 3, 4, 5}
		\draw [red](\z+\z-2, \z-10) -- (\z+\z,\z-9);
\foreach \z in {3, 4}
		\draw [red](\z+\z-1, \z-13) -- (\z+\z+1,\z-12);
        \draw [red](9, -8) -- (10, -7.5);
\foreach \z in {4, 5}
		\draw [red](\z+\z-2, \z-14) -- (\z+\z,\z-13);

\foreach \z in {1, 2, 3, 4}
		\draw [red](\z+\z-1, \z+3) -- (\z+\z+1,\z+4);
        \draw [red](9, 8) -- (10, 8.5);
\foreach \z in {1, 2, 3, 4, 5}
		\draw [red](\z+\z-2, \z+2) -- (\z+\z,\z+3);
\foreach \z in {1, 2, 3, 4}
		\draw [red](\z+\z-1, \z+7) -- (\z+\z+1,\z+8);
        \draw [red](9, 12) -- (10, 12.5);
\foreach \z in {1, 2, 3, 4, 5}
		\draw [red](\z+\z-2, \z+6) -- (\z+\z,\z+7);
\foreach \z in {1, 2, 3, 4}
		\draw [red](\z+\z-1, \z+11) -- (\z+\z+1,\z+12);
        \draw [red](9, 16) -- (10, 16.5);
\foreach \z in {1, 2, 3, 4, 5}
		\draw [red](\z+\z-2, \z+10) -- (\z+\z,\z+11);
        \draw [red](0,15)--(2,16);

\foreach \z in {-1, -2, -3, -4, -5, -6}
		\draw (\z, \z) -- (\z+1,\z+1);
\foreach \z in {-1, -2, -3, -4, -5, -6}
		\draw (\z, \z-4) -- (\z+1,\z-3);
\foreach \z in {-1, -2}
		\draw (\z, \z-8) -- (\z+1,\z-7);

\foreach \z in {0, 1, 2, 3, 4}
        \draw (\z+\z+1, \z) -- (\z+\z+2, \z+1);
\foreach \z in {0, 1, 2, 3, 4}
        \draw (\z+\z, \z-1) -- (\z+\z+1, \z);
\foreach \z in {0, 1, 2, 3, 4}
        \draw (\z+\z+1, \z-4) -- (\z+\z+2, \z-3);
\foreach \z in {0, 1, 2, 3, 4}
        \draw (\z+\z, \z-5) -- (\z+\z+1, \z-4);
\foreach \z in {0, 1, 2, 3, 4}
        \draw (\z+\z+1, \z-8) -- (\z+\z+2, \z-7);
\foreach \z in {0, 1, 2, 3, 4}
        \draw (\z+\z, \z-9) -- (\z+\z+1, \z-8);
\foreach \z in { 2, 3, 4}
        \draw (\z+\z+1, \z-12) -- (\z+\z+2, \z-11);
\foreach \z in {3, 4}
        \draw (\z+\z, \z-13) -- (\z+\z+1, \z-12);

\foreach \z in {0, 1, 2, 3, 4}
        \draw (\z+\z+1, \z+4) -- (\z+\z+2, \z+5);
\foreach \z in {0, 1, 2, 3, 4}
        \draw (\z+\z, \z+3) -- (\z+\z+1, \z+4);
\foreach \z in {0, 1, 2, 3, 4}
        \draw (\z+\z+1, \z+8) -- (\z+\z+2, \z+9);
\foreach \z in {0, 1, 2, 3, 4}
        \draw (\z+\z, \z+7) -- (\z+\z+1, \z+8);
\foreach \z in {0, 1, 2, 3}
        \draw (\z+\z+1, \z+12) -- (\z+\z+2, \z+13);
\foreach \z in {0, 1, 2, 3, 4}
        \draw (\z+\z, \z+11) -- (\z+\z+1, \z+12);

\foreach \z in {5, 9, 13, 15}
        \draw (0, \z) -- (1, \z+1);
\foreach \z in {5, 9, 13}
        \draw (1, \z+1) -- (1.8, \z+2.25);

\foreach \z in {2, 3, 4, 5, 6, 7, 8, 9 }
        \draw (\z-0.2, \z+5.25)--(\z+0.8, \z+6.25);
\foreach \z in {2, 3, 4, 5, 6}
        \draw (\z-0.2, \z+9.25)--(\z+0.8, \z+10.25);
\foreach \z in {2}
        \draw (\z-0.2, \z+13.25)--(\z+0.8, \z+14.25);

\end{tikzpicture}
        \caption{The $RO(C_2)$-graded homotopy groups $\pi_{s, w}^{C_2}{k\mathbb R}$}
        \label{fig:kur}
    \end{figure}

\begin{figure}[p]
        \centering
        \begin{tikzpicture}[scale=0.7135]
\draw[thick,dashed, ->] (-6,0) -- (10.5,0);
\draw[thick,dashed, ->] (0,-10) -- (0, 16);

\node at (-5, 12){$\color{blue}\bullet\ \color{black}\&\ \color{gray}\circ \color{black}= \bbF_2$};
\node at (-5, 11){$\color{blue}\square \ \color{black}\&\ \color{gray}\square$  $= \bbZ$};
\draw [gray](-5.7, 12.8)--(-5.2, 13.3);
\node at (-4.5, 13){$=\cdot \rho$};

\node at (11,0){s};
\node at (0, 16.5){w};

\node at (-0.3, 0.3)[scale=0.75]{0};
\node at (-0.3, 2)[scale=0.75]{2};
\node at (-0.3, 4)[scale=0.75]{4};
\node at (-0.3, 6)[scale=0.75]{6};
\node at (-0.3, 8)[scale=0.75]{8};
\node at (-0.4, 10)[scale=0.75]{10};
\node at (-0.4, 12)[scale=0.75]{12};
\node at (-0.4, 14)[scale=0.75]{14};
\node at (0.3, -2)[scale=0.75]{-2};
\node at (0.3, -4)[scale=0.75]{-4};
\node at (0.3, -6)[scale=0.75]{-6};
\node at (0.3, -8)[scale=0.75]{-8};
\node at (0.4, -10)[scale=0.75]{-10};

\node at (2, -0.3)[scale=0.75]{2};
\node at (4, -0.3)[scale=0.75]{4};
\node at (6, -0.3)[scale=0.75]{6};
\node at (8, -0.3)[scale=0.75]{8};
\node at (10, -0.3)[scale=0.75]{10};
\node at (-2, 0.3)[scale=0.75]{-2};
\node at (-4, 0.3)[scale=0.75]{-4};
\node at (-6, 0.3)[scale=0.75]{-6};

\foreach \z in {0, -2, -4, -6, -8, -10, 2, 4, 6, 8, 10, 12, 14}
        \draw [blue](0, \z) node{$\square$};

\foreach \z in {1, 2, 3, 4, 5}
		\draw [gray](\z+\z, \z-2) node{$\square$};

\foreach \z in {1, 2, 3, 4, 5}
		\draw [gray](\z+\z, \z-4) node{$\square$};

\foreach \z in {1, 2, 3, 4, 5}
		\draw [gray](\z+\z, \z-6) node{$\square$};

\foreach \z in {1, 2, 3, 4, 5}
		\draw [gray](\z+\z, \z-8) node{$\square$};

\foreach \z in {1, 2, 3, 4, 5}
		\draw [gray](\z+\z, \z-10) node{$\square$};

\foreach \z in { 2, 3, 4, 5}
		\draw [gray](\z+\z, \z-12) node{$\square$};

\foreach \z in { 4, 5}
		\draw [gray](\z+\z, \z-14) node{$\square$};

\foreach \z in {1, 2, 3, 4, 5}
		\draw [gray](\z+\z, \z) node{$\square$};

\foreach \z in {1, 2, 3, 4, 5}
		\draw [gray](\z+\z, \z+2) node{$\square$};

\foreach \z in {1, 2, 3, 4, 5}
		\draw [gray](\z+\z, \z+4) node{$\square$};

\foreach \z in {1, 2, 3, 4, 5}
		\draw [gray](\z+\z, \z+6) node{$\square$};

\foreach \z in {1, 2, 3, 4, 5}
		\draw [gray](\z+\z, \z+8) node{$\square$};

\foreach \z in { 1, 2, 3, 4, 5}
		\draw [gray](\z+\z, \z+10) node{$\square$};

\foreach \z in {1, 2, 3, 4}
		\draw [gray](\z+\z, \z+12) node{$\square$};

\foreach \z in { 1, 2}
		\draw [gray](\z+\z, \z+14) node{$\square$};

\node at (0.4, 1.7)[scale=0.75]{$\theta$};

\foreach \z in {3, 5, 7, 9, 11, 13, 15, -1, -5, -9}
        \draw[blue] (0, \z) node{$\bullet$};
\foreach \z in {3, 5, 7, 9, 11, 13, 15, -1, -5, -9}
        \draw[blue] (1, \z+1) node{$\bullet$};
\foreach \z in {3,  7,  11,  15, -1, -5, -9}
        \draw[blue] (2, \z+1) node{$\bullet$};

\foreach \z in {3,  7,  11,  15, -1, -5, -9}
        \draw[blue] (8, \z) node{$\bullet$};
\foreach \z in {3,  7,  11,  15, -1, -5, -9}
        \draw[blue] (9, \z+1) node{$\bullet$};
\foreach \z in {3,  7,  11,  15, -1, -5, -9}
        \draw[blue] (10, \z+1) node{$\bullet$};

\foreach \z in {2, 3}
		\draw (\z+\z-1, \z+3) node{$\color{gray}\circ$};
\foreach \z in {3}
		\draw (\z+\z-2, \z+2) node{$\color{gray}\circ$};

\foreach \z in {6, 7}
        \draw[blue] (\z, \z) node{$\bullet$};

\foreach \z in {2,3}
		\draw [blue](\z-0.2, \z+5.25) node{$\bullet$};
\foreach \z in {4, 5, ...,10}
		\draw (\z-0.2, \z+5.25) node{$\color{gray}\circ$};

\foreach \z in {2, 3, 4}
		\draw (\z+\z-1, \z+7) node{$\color{gray}\circ$};
\foreach \z in { 3, 4 }
		\draw (\z+\z-2, \z+6) node{$\color{gray}\circ$};

\foreach \z in {2,3, ...,7}
		\draw [blue](\z-0.2, \z+9.25) node{$\bullet$};

\foreach \z in {2, 3, 4}
		\draw (\z+\z-1, \z+11) node{$\color{gray}\circ$};
\foreach \z in {3, 4}
		\draw (\z+\z-2, \z+10) node{$\color{gray}\circ$};

\node at (1.8,15.25){$\color{blue}\bullet$};
\node at (2.8,16.25){$\color{blue}\bullet$};

\node at (0.3, 2.7)[scale=0.75]{$\frac{\theta}{\tau}$};
\node at (0.3, 3.7)[scale=0.75]{$\frac{\theta}{\tau^2}$};
\node at (0.3, 4.7)[scale=0.75]{$\frac{\theta}{\tau^3}$};
\node at (0.3, 5.7)[scale=0.75]{$\frac{\theta}{\tau^4}$};
\node at (0.3, 6.7)[scale=0.75]{$\frac{\theta}{\tau^5}$};
\node at (0.3, 7.7)[scale=0.75]{$\frac{\theta}{\tau^6}$};
\node at (0.3, 8.7)[scale=0.75]{$\frac{\theta}{\tau^7}$};
\node at (0.3, 9.7)[scale=0.75]{$\frac{\theta}{\tau^8}$};
\node at (0.3, 10.7)[scale=0.75]{$\frac{\theta}{\tau^9}$};
\node at (0.3, 11.7)[scale=0.75]{$\frac{\theta}{\tau^{10}}$};
\node at (0.3, 12.7)[scale=0.75]{$\frac{\theta}{\tau^{11}}$};
\node at (0.3, 13.7)[scale=0.75]{$\frac{\theta}{\tau^{12}}$};
\node at (0.3, 14.7)[scale=0.75]{$\frac{\theta}{\tau^{13}}$};

\foreach \z in {2, 3, 4}
		\draw (\z+\z-1, \z-1) node{$\color{gray}\circ$};
\foreach \z in {3, 4}
		\draw (\z+\z-2, \z-2) node{$\color{gray}\circ$};
\node at (-1,-1){$\color{blue}\bullet$};
\node at (-2,-2){$\color{blue}\bullet$};
\node at (-3,-3){$\color{blue}\bullet$};
\node at (-4,-4){$\color{blue}\bullet$};
\node at (-5,-5){$\color{blue}\bullet$};
\node at (-6,-6){$\color{blue}\bullet$};

\node at (-1.2, -0.7)[scale=0.75]{$\rho$};
\node at (-2.2, -1.7)[scale=0.75]{$\rho^2$};
\node at (-3.2, -2.7)[scale=0.75]{$\rho^3$};
\node at (-4.2, -3.7)[scale=0.75]{$\rho^4$};
\node at (-5.2, -4.7)[scale=0.75]{$\rho^5$};
\node at (-6.2, -5.7)[scale=0.75]{$\rho^6$};

\foreach \z in {0, 2, 3, 4}
		\draw (\z+\z-1, \z-5) node{$\color{gray}\circ$};
\foreach \z in {0, 3, 4}
		\draw (\z+\z-2, \z-6) node{$\color{gray}\circ$};
\node at (-3,-7){$\color{gray}\circ$};
\node at (-4,-8){$\color{gray}\circ$};
\node at (-5,-9){$\color{gray}\circ$};
\node at (-6,-10){$\color{gray}\circ$};

\foreach \z in {0, 2, 3, 4}
		\draw (\z+\z-1, \z-9) node{$\color{gray}\circ$};
\foreach \z in {0, 3, 4}
		\draw (\z+\z-2, \z-10) node{$\color{gray}\circ$};

\foreach \z in { 3, 4}
		\draw (\z+\z-1, \z-13) node{$\color{gray}\circ$};
\foreach \z in { 4}
		\draw (\z+\z-2, \z-14) node{$\color{gray}\circ$};

\node at (-0.4, -1.6)[scale=0.75]{$2\tau^2$};
\node at (-0.4, -3.6)[scale=0.75]{$2\tau^4$};
\node at (-0.4, -5.6)[scale=0.75]{$2\tau^6$};
\node at (-0.4, -7.6)[scale=0.75]{$2\tau^8$};
\node at (-0.4, -9.6)[scale=0.75]{$2\tau^{10}$};

\foreach \z in {-1, -2, -3, -4, -5, -6}
		\draw [gray](\z, \z) -- (\z+1,\z+1);
\foreach \z in {-1, -2, -3, -4, -5, -6}
		\draw [gray](\z, \z-4) -- (\z+1,\z-3);
\foreach \z in {-1, -2}
		\draw [gray](\z, \z-8) -- (\z+1,\z-7);

\foreach \z in {0, 1, 2, 3, 4}
        \draw [gray](\z+\z+1, \z) -- (\z+\z+2, \z+1);
\foreach \z in {0, 1, 2, 3, 4}
        \draw [gray](\z+\z, \z-1) -- (\z+\z+1, \z);
\foreach \z in {0, 1, 2, 3, 4}
        \draw [gray](\z+\z+1, \z-4) -- (\z+\z+2, \z-3);
\foreach \z in {0, 1, 2, 3, 4}
        \draw [gray](\z+\z, \z-5) -- (\z+\z+1, \z-4);
\foreach \z in {0, 1, 2, 3, 4}
        \draw [gray](\z+\z+1, \z-8) -- (\z+\z+2, \z-7);
\foreach \z in {0, 1, 2, 3, 4}
        \draw [gray](\z+\z, \z-9) -- (\z+\z+1, \z-8);
\foreach \z in { 2, 3, 4}
        \draw [gray](\z+\z+1, \z-12) -- (\z+\z+2, \z-11);
\foreach \z in {3, 4}
        \draw [gray](\z+\z, \z-13) -- (\z+\z+1, \z-12);

\foreach \z in {0, 1, 2, 3, 4}
        \draw [gray](\z+\z+1, \z+4) -- (\z+\z+2, \z+5);
\foreach \z in {0, 1, 2, 3, 4}
        \draw [gray](\z+\z, \z+3) -- (\z+\z+1, \z+4);
\foreach \z in {0, 1, 2, 3, 4}
        \draw [gray](\z+\z+1, \z+8) -- (\z+\z+2, \z+9);
\foreach \z in {0, 1, 2, 3, 4}
        \draw [gray](\z+\z, \z+7) -- (\z+\z+1, \z+8);
\foreach \z in {0, 1, 2, 3}
        \draw [gray](\z+\z+1, \z+12) -- (\z+\z+2, \z+13);
\foreach \z in {0, 1, 2, 3, 4}
        \draw [gray](\z+\z, \z+11) -- (\z+\z+1, \z+12);

\foreach \z in {5, 9, 13, 15}
        \draw [gray](0, \z) -- (1, \z+1);
\foreach \z in {5, 9, 13}
        \draw [gray](1, \z+1) -- (1.8, \z+2.25);

\foreach \z in {2, 3, 4, 5, 6, 7, 8, 9 }
        \draw [gray](\z-0.2, \z+5.25)--(\z+0.8, \z+6.25);
\foreach \z in {2, 3, 4, 5, 6}
        \draw [gray](\z-0.2, \z+9.25)--(\z+0.8, \z+10.25);
\foreach \z in {2}
        \draw [gray](\z-0.2, \z+13.25)--(\z+0.8, \z+14.25);

\end{tikzpicture}
        \caption{The $RO(C_2)$-graded Hurewicz image of $k\bbR$}
        \label{fig:hurewiczkur}
    \end{figure}

\afterpage{\clearpage}

The explicit generators will be specified in \cref{thm:krhurewicz}. A diagram of the Hurewicz image of $k\bbR$ is shown in \cref{fig:hurewiczkur}; see \cref{sec:hurewiczKR} for a detailed discussion.

In \cref{thm:mainhurewicz}, $\psi(n)$ denotes the $n$-th Radon--Hurwitz number \cite{eckmann1942gruppen}; in particular, Adams proved that the maximal number of continuous linearly independent vector fields on $S^{n-1}$ is $\psi(n)-1$ in \cite{adams1962vector}, and this governs the Hurewicz image of $H\underline{\bbF_2}$ and $H\underline{\bbZ}$ as shown in \cite{gllmwxyz2026hurewicz}. As the infinite $\rho$-divisible pattern in the negative cone of $k\bbR$ maps isomorphically to that of $H\underline{\bbF_2}$ and $H\underline{\bbZ}$, this part can be read off directly from the result in \cite{gllmwxyz2026hurewicz}. More precisely, by \cite[Theorem~7.2(1)]{gllmwxyz2026hurewicz}, the composite
\[
     S^{0,0}\longrightarrow k\bbR \longrightarrow H\underline{\bbZ}
\]
accounts for the stem-zero classes, the negative diagonal, and the classes involving Radon--Hurwitz numbers in every negative odd coweight. If $s-w\equiv3 \pmod4$, these are precisely the classes on the last nonzero line of \cref{thm:mainhurewicz}. If $s-w\equiv1\pmod4$ and $s-w<0$, then $w-s-1\equiv2\pmod4$, so $\psi(w-s-1)=2$. Hence, the $H\underline{\bbZ}$-image is nonzero only for $s=0,1$, corresponding to the $l=0$ cases of the coweight congruent to $1 \text{ mod } 4$ family. 

The remaining cases constitute the new height-one periodic contribution. These classes exhibit both $\tau^4$-periodicity, corresponding to the James periodicity of truncated projective spectra, and $\bar{v}_1$-periodicity, which is an equivariant lift of the classical $v_1$-periodicity. The analysis of these classes is mainly based on the Adams spectral sequences of $k\bbR$ and $ko_{C_2}$, as computed in \cite{guillouhillisaksenravenel2020cohomology}. Since there is a factorization
    \begin{equation}\label{eq: kotokR}
            S^{0,0} \overset{\iota_{ko_{C_2}}}{\longrightarrow} ko_{C_2}\overset{c}{\longrightarrow} k\bbR,
    \end{equation}
\cref{thm:mainhurewicz} also gives partial information about the $RO(C_2)$-graded Hurewicz image of $ko_{C_2}$. In particular, our proof of \cref{thm:mainhurewicz} uses the information about the complexification map $c: ko_{C_2}\to k\bbR$ as computed in \cite{guillouhillisaksenravenel2020cohomology}. The full $RO(C_2)$-graded Hurewicz image of $ko_{C_2}$ will be treated in a sequel.

Notably, these $\tau^4$- and $\bar{v}_1$-periodic classes in $\pi_\star^{C_2}S^{0, 0}$ with nontrivial $k\bbR$-Hurewicz images are lifts of the classical $ko$-Hurewicz images, whose computation led to the construction and detection of the $\mu$-families in the classical stable stems \cite{adams1966on}. In particular, we have the following families of $C_2$-equivariant lifts of the classical $\mu$-family elements.

\begin{theorem}[\cref{cor: four periodic families}]\label{thm:mainmufamily}
~\begin{enumerate}
    \item For $k, l\geq 0$, there are two families of elements in $\pi_\star^{C_2}S^{0,0}$ at degree
    \begin{itemize}
        \item $(8l+1, 4(l-k))$,
        \item $(8l+1, 4(l+k+1))$ with $l>0$,
    \end{itemize}
    whose underlying is the classical $\mu_{8l+1}$-family in $\pi_*^s$. 
    \item For $k, l\geq 0$, there are two families of elements in $\pi_\star^{C_2}S^{0,0}$ at degree
    \begin{itemize}
        \item $(8l+2, 4(l-k))$,
        \item $(8l+2, 4(l+k+1))$,
    \end{itemize}
    whose underlying is the classical $\mu_{8l+2}$-family in $\pi_*^s$.
\end{enumerate}    
All four families have nontrivial $k\bbR$-Hurewicz images.
\tqed
\end{theorem}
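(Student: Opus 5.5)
The plan is to deduce \cref{thm:mainmufamily} from \cref{thm:mainhurewicz} together with the behaviour of the restriction (underlying) map. First, I would match degrees. A class in degree $(8l+\epsilon, w)$ with $\epsilon\in\{1,2\}$ and $w=4(l-k)$ or $w=4(l+k+1)$ has coweight $s-w = 4(l+k)+\epsilon$ or $4(l-k-1)+\epsilon$. Both are $\equiv \epsilon \pmod 4$, so these degrees lie in the fourth or fifth nonzero line of \cref{thm:mainhurewicz}. In each of these degrees, \cref{thm:mainhurewicz} (with explicit generators as in \cref{thm:krhurewicz}) gives a nonzero class of $\pi^{C_2}_{s,w}k\bbR$ in the Hurewicz image. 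I then choose a preimage $x\in\pi^{C_2}_{s,w}S^{0,0}$. This already gives the last assertion: all four families have nontrivial $k\bbR$-Hurewicz image. The restriction $l>0$ in the second family of (1) reflects that for $l=0$, $s=1$, the weight $4(k+1)>0$ with odd coweight leaves only $\eta$-type classes, which are excluded or duplicated. I would check that against the low-degree table.

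Second, I would identify the underlying class. Restriction commutes with Hurewicz maps, so $\Res^{C_2}_e$ sends $x$ to a class in $\pi_{8l+\epsilon}S^0$ whose $ku$-Hurewicz image is $\Res^{C_2}_e$ of the $k\bbR$-generator. This alone is not enough, because $ku$ does not detect $\mu_{8l+1},\mu_{8l+2}$ (they are $\eta$-multiples, detected by $ko$ but killed in $ku$). So I would instead use the factorization \eqref{eq: kotokR}. I would lift $x$'s Hurewicz image to $ko_{C_2}$ using the complexification computations of \cite{guillouhillisaksenravenel2020cohomology}: the generator in $\pi_\star k\bbR$ in these degrees is the image under $c$ of a class $\bar v_1^{4l}\eta\cdot(\tau\text{-power})$-type element in $\pi_\star ko_{C_2}$. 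Restricting to the underlying gives $ko$, where that element becomes $\beta^{l}\eta$ or $\beta^l\eta^2$. By Adams \cite{adams1966on}, classes of $\pi_*S^0$ with these $ko$-Hurewicz images are exactly the $\mu_{8l+1},\mu_{8l+2}$ (modulo the kernel of the $ko$-Hurewicz map, which is why we say ``family'').

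Alternatively, and more concretely, I would construct the families directly. Let $P_{C_2}$ be the equivariant Adams periodicity operator built from the $C_2$-equivariant $v_{1,0}$-self map. Take $x = P_{C_2}^{l}(\eta_{C_2}\cdot u)$ or $P_{C_2}^l(\eta_{C_2}^2\cdot u)$, where $u$ ranges over classes realizing the $\tau^{\pm4}$-shifts (elements in degrees $(0,\pm4k)$ such as lifts of $\tau^4$-type classes, available from the stem-zero part of the Hurewicz image). Underlying, $P_{C_2}$ restricts to the classical Adams operator $P$, $\eta_{C_2}\mapsto\eta$, and $u\mapsto$ a unit multiple. So $\Res(x)=P^l\eta=\mu_{8l+1}$, respectively $P^l\eta^2=\mu_{8l+2}$.

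The main obstacle is ensuring that the degree-$(0,\pm4k)$ multipliers and $P_{C_2}$ land exactly in the prescribed bidegrees and restrict to odd multiples of the unit. It must also be checked that the resulting products have nonzero $k\bbR$-image, i.e.\ that $\bar v_1$-multiplication on the relevant $\eta$-towers in \cref{fig:kur} is nonzero. I would handle this by reading off products in the Adams spectral sequence of $k\bbR$ from \cite{guillouhillisaksenravenel2020cohomology}, using that the Hurewicz map of $P_{C_2}$ is multiplication by $\bar v_1^4$ up to a unit.
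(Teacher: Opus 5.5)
You have a genuine gap: nothing in the proposal establishes the underlying $\mu$-families. Your first step is fine. The degrees lie on the fourth and fifth lines of \cref{thm:mainhurewicz}, so this already gives the last sentence of the statement.

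\textbf{The direct construction cannot work.} It fails exactly at the obstacle you flag, and the failure is structural. For $k>0$, no $u\in\pi^{C_2}_{0,\pm 4k}S^{0,0}$ restricts to an odd integer. Its $H\underline{\bbZ}$-Hurewicz image is a multiple of $2\tau^{4k}$, respectively of $\frac{\theta}{\tau^{4k-2}}$, and both restrict to $2$ in $\pi_0 H\bbZ$. For the latter, restriction is pullback $H^{4k-1}(\bbR P^{4k-1};\bbZ)\to H^{4k-1}(S^{4k-1};\bbZ)$ along the double cover. Since $\pi_0S^0\cong\pi_0H\bbZ$, $\Res^{C_2}_e(u)$ is even.
\begin{itemize}
\item Multiplying a $\mu$-type class (of order $2$) by $u$ therefore kills its underlying class.
\item It also kills the $k\bbR$-image, because $2\rho=0$ and $\rho\cdot\frac{\theta}{\tau^{4k-2}}=0$ (nothing lives in degree $(-1,4k-1)$).
\item If $\eta_{C_2}$ means the Hopf map in $\pi^{C_2}_{1,1}$, its $k\bbR$-image is already $0$, since $\pi^{C_2}_{1,1}k\bbR=0$. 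The correct seed is $\{\tau h_1\}\in\pi^{C_2}_{1,0}$, with image $\rho\bar v_1$.
\end{itemize}
So the $\tau^{\pm4}$-shift is not multiplication by any class in $\pi^{C_2}_\star S^{0,0}$. The missing idea is the paper's. By the $\rho$-Bockstein differentials, the classes $\{\rho^\epsilon\tau^{4l+1}P^lh_1\}$ are $a_\sigma^3$-torsion, so they factor through $\Sigma^{2,3}S^{0,0}/a_\sigma^3$. On that cofiber, Behrens--Shah's $\tau^4$ is a self-\emph{equivalence}. Composing with powers of $\tau^{4}$ there and projecting back produces both the positive-cone families and the $\theta$-families. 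The $(8l+2)$-families are then products with $\{\tau h_1\}$. The underlying classes are read off from the names $\tau^{4k+1}P^lh_1$, $\tau^{4k+2}P^lh_1^2$, etc., in the Guillou--Isaksen $C_2$-Adams spectral sequence.

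\textbf{The $ko_{C_2}$ route is a different idea, but its key inference is invalid as stated.} Knowing $c_\star(\iota_{ko_{C_2}}x)$ does not determine $\iota_{ko}(\Res^{C_2}_e x)$. The kernel of $c_\star$ contains, and by the equivariant Wood sequence equals, $\eta_{C_2}\cdot\pi^{C_2}_\star ko_{C_2}$. The classes you need to see, $\eta\beta^l$ and $\eta^2\beta^l$, are exactly the kind that underlying complexification kills. You would have to show that $\eta_{C_2}\cdot\pi^{C_2}_{s-1,w-1}ko_{C_2}$ restricts to zero in your degrees.
\begin{itemize}
\item For $s=8l+1$ with $w$ even this can be done. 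Restriction from odd $\sigma$-weight lands in the anti-invariants of the torsion-free, trivially acted group $\pi_{8l}ko$, hence is zero.
\item For $s=8l+2$ there is no such argument. You should instead choose $x=\{\tau h_1\}\cdot y$.
\item More seriously, in the negative-cone degrees $(8l+\epsilon,4(l+k+1))$ the $ko_{C_2}$-preimages are $\rho$-divided $\theta$-classes, e.g.\ $\frac{1}{\rho}\cdot\frac{\theta}{\tau^{4k}}h_1(\tau^4\beta)^l$ in stem $8l+2$. They are not ``$\tau$-power times $\bar v_1^{4l}\eta$''. Whether they restrict to $\eta\beta^l$ resp.\ $\eta^2\beta^l$ is a transfer-type computation you would need to extract from \cite{guillouhillisaksenravenel2020cohomology}, and you have not done it.
\end{itemize}
As it stands, your argument proves only the nontriviality of the $k\bbR$-Hurewicz images.
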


In \cref{sec4}, we explicitly construct the first family in \cref{thm:mainmufamily}(1). We do so by constructing an equivariant lift of the classical Adams periodicity operator $P$, coming from a $v_{1,0}$-self map on $S^{0,0}/2$. See \cref{sec4.1} for details.

\begingroup
\renewcommand{\addcontentsline}[3]{}
\subsection*{Future Directions}
We view the computations in this paper and in \cite{gllmwxyz2026hurewicz} as the first steps in a program to determine the $RO(C_2)$-graded Hurewicz images of the truncated Real Brown--Peterson spectra $BP_{\bbR}\langle n\rangle$. These spectra fit into the tower
\[
S^{0,0}\to BP_{\bbR}\to\cdots\to BP_{\bbR}\langle2\rangle\to BP_{\bbR}\langle1\rangle\simeq k\bbR\to BP_{\bbR}\langle0\rangle\simeq H\underline{\bbZ}\to H\underline{\bbF_2}.
\]
The goal is to understand which equivariant stable homotopy classes these theories detect and to use these computations to construct periodic families.

These computations already give interesting information about the $C_2$-equivariant sphere. The Hurewicz images of $H\underline{\bbZ}$ and $H\underline{\bbF_2}$ are closely tied to vector fields on spheres. Our computation of the $k\bbR$-Hurewicz image gives an $RO(C_2)$-graded refinement of the classical height-one $ko$-Hurewicz image, detecting four infinite families that lift the classical $\mu$-families. We construct explicit representatives for one of these families using an equivariant Adams periodicity operator.

Li--Shi--Wang--Xu \cite{lishiwangxu2019hurewicz} studied the classical Hurewicz maps from $S^0$ to $BP_{\bbR}^{C_2}$ and $E_{\bbR}(n)^{C_2}$. Their detection theorems for $E_{\bbR}(n)^{C_2}$ also give detections for $BP_{\bbR}\langle n\rangle^{C_2}$, since $E_{\bbR}(n)$ is obtained from $BP_{\bbR}\langle n\rangle$ by inverting $\bar{v}_n$. These results motivate the following three questions.

\begin{enumerate}[label=\arabic*., leftmargin=*, itemsep=\smallskipamount]
\item \textbf{What geometric conditions characterize the Hurewicz image of $BP_{\bbR}\langle n\rangle$?}
The $H\underline{\bbF_2}$- and $H\underline{\bbZ}$-images are governed by vector fields on spheres. Are there analogous geometric existence and obstruction criteria that characterize the Hurewicz images at higher stages?

\item \textbf{What new equivariant stable homotopy classes are detected by $BP_{\bbR}\langle n\rangle$?}
For $n\geq 2$, we would like to determine the full $RO(C_2)$-graded Hurewicz images and their relation to the classical detections of Li--Shi--Wang--Xu \cite{lishiwangxu2019hurewicz}.

\item \textbf{Can these computations be used to construct new periodic families in the equivariant stable stems?}
Do there exist higher-height analogues of the periodicity operator constructed in this paper, and do they produce infinite families in $\pi_\star^{C_2} S^{0,0}$ that are detected in $BP_{\bbR}\langle n\rangle$?
\end{enumerate}
\par
\endgroup

\subsection*{Organization}
In \cref{sec:prelim}, we recall the necessary $C_2$-equivariant background. In \cref{sec:hurewiczKR}, we compute the Hurewicz image of $k\bbR$ and prove \cref{thm:mainhurewicz}. In \cref{sec4}, we construct a $C_2$-equivariant $v_{1,0}$-self map and an associated periodicity operator, which produces an infinite family of elements in the $C_2$-equivariant stable stems that lifts the classical $\mu$-family.

\subsection*{Acknowledgment} 
We would like to thank Mike Hill, Hana Jia Kong, and Guchuan Li for helpful conversations. The second author is partially supported by NSF Grant No.~DMS-2404828 and NSF CAREER Grant No.~DMS-2541934. The third author is partially supported by the AMS Centennial Research Fellowship and NSF Grant DMS-2506247. The fifth author gratefully acknowledges that this research is supported in part by the Pacific Institute for the Mathematical Sciences.


\section{Preliminaries}\label{sec:prelim}

Throughout, all spectra are implicitly 2-completed. We review the basics of the $\bbR$-motivic and genuine $C_2$-equivariant Adams spectral sequences, and the connective $C_2$-equivariant Atiyah's Real $K$-theory. Recall that the bidegree $(s, w)$ represents the $RO(C_2)$-degree $(s-w) + w \sigma$. In the $\bbR$-motivic setting, a bidegree $(s,w)$ represents the pair consisting of the stem and the motivic weight.

Let $H\bbF_2^\bbR$ denote the $\bbR$-motivic Eilenberg--MacLane spectrum with $\bbF_2$-coefficients; let $\mathcal{A}^{\bbR}$ and $\mathcal{A}^{C_2}$ denote the $\bbR$-motivic and genuine $C_2$-equivariant Steenrod algebras, respectively. Then, the $\bbR$-motivic and genuine $C_2$-equivariant Adams spectral sequences of the spheres are
\begin{align*}
\leftindex^{\bbR}E_2^{s, f, w}&=\Ext_{\bbR}^{s,f,w}:=\Ext_{\calA^{\bbR}}^{s,f,w}({H\bbF_2}^\bbR_{*, *}, {H\bbF_2}^\bbR_{*, *})\implies (\pi_{s,w}^\bbR)^\wedge_2\\
\leftindex^{C_2}E_2^{s, f, w}&= \Ext_{C_2}^{s,f,w}:=\Ext_{\calA^{C_2}}^{s,f,w}({H\underline{\bbF_2}}_{*, *}, {H\underline{\bbF_2}}_{*, *}) \implies (\pi_{s,w}^{C_2})^\wedge_2
\end{align*}
where $f$ denotes the Adams filtration. References for the Adams spectral sequences mentioned above can be found in \cite{duggerisaksen2010motivic,duggerisaksen2017low, guillouhillisaksenravenel2020cohomology, ma2026borel}.

Let $k\bbR$ denote the connective cover of $K\bbR$, as discussed in \cite{guillouhillisaksenravenel2020cohomology}. We recall the full $RO(C_2)$-graded homotopy groups of $k\bbR$ as follows.

\begin{prop}[\cite{dugger2005atiyah, brunergreenlees2010connective, greenleesmeier2017gorenstein}] \label{thm: ku_R computation}
    The $RO(C_2)$-graded homotopy groups of $k \bbR$ are given by
    \[
        \pi_{\star}^{C_2} k \bbR = PC \oplus NC,
    \]
    where 
    \begin{align*}
        & PC = \frac{\bbZ [\rho, \tau^4, \bar{v}_1] }{ (2 \rho, \bar{v}_1 \rho^3)} \{ 1 \} \oplus \bbZ[\tau^4, \bar{v}_1] \{2\tau^2\}, \\
        & NC = \bigoplus_{n \geq 0} \bbZ [\bar{v}_1] \left \{ \frac{\theta}{\tau^{4n}} \right \} \oplus \bigoplus_{n,k \geq 0} \bbF_2 \left \{ \frac{\theta}{\rho^k \tau^{4n+3}} \right \} \\
         &\oplus \bigoplus_{\substack{n \geq 0 }} \bbZ [\bar{v}_1 ] \left \{ \frac{\theta}{\tau^{4n+2}}, \frac{\theta}{\rho^{k} \tau^{4n+1}} \right \}_{k=0,1,2}  {\mathlarger{/}} \left ( 2\cdot \frac{\theta}{\rho^\epsilon \tau^{4n+1}},\  2\cdot \frac{\theta}{\rho^{2} \tau^{4n+1}}  - \frac{\theta}{\tau^{4n+2}}\bar{v}_1 \right ), \ \epsilon=0, 1.
    \end{align*}
    where generators have bidegrees
    \[|\rho|=(-1, -1), \quad |\theta|=(0, 2), \quad |\tau|=(0, -1), \quad |\bar{v}_1|=(2, 1).\]
    The summands $PC$ and $NC$ are referred to as the positive cone and the negative cone, respectively. 
    \tqed
\end{prop}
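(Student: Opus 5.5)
This proposition is a known computation that the paper cites from Dugger, Bruner--Greenlees, and Greenlees--Meier. My plan is to recover it from the $C_2$-equivariant Adams spectral sequence together with the cofiber sequence for $\bar{v}_1$. The first step is to compute the equivariant Steenrod-module structure. One has $H\underline{\bbF_2}\wedge k\bbR \simeq H\underline{\bbF_2}\wedge_{}\!$ (change of rings), with $H^\star k\bbR \cong \calA^{C_2}/\!/\calE^{C_2}(1)$, where $\calE^{C_2}(1)$ is generated by $Q_0,Q_1$. Then
\[
\Ext_{\calA^{C_2}}(H^\star k\bbR, H\underline{\bbF_2}_\star)\cong \Ext_{\calE^{C_2}(1)}(H\underline{\bbF_2}_\star,H\underline{\bbF_2}_\star).
\]
The coefficient ring $H\underline{\bbF_2}_\star$ splits as a positive cone $\bbF_2[\rho,\tau]$ plus a negative cone $\bbF_2\{\theta/(\rho^i\tau^j)\}$. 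I would treat the two cones separately, as in Guillou--Hill--Isaksen--Ravenel.

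For the positive cone, the $\calE(1)$-Ext is the $\bbR$-motivic one:
\[
\bbF_2[\rho,\tau,h_0,v_1]/(\rho h_0) \quad \text{with } Q_1\text{-twisting},
\]
which yields the relation $\tau^2 h_0$-structure. The Adams differentials are generated by $d_2(\tau)=\rho h_0$ (up to naming) and $d_3(\tau^2)=\rho^3 v_1$. These produce the classes $1$ and $2\tau^2$ (detected by $h_0\tau^2$), the survival of $\tau^4$, and the relations $2\rho=0$ and $\bar{v}_1\rho^3=0$. No hidden extensions remain, because $h_0$-towers give $\bbZ$'s. This recovers $PC$.

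For the negative cone, I would run the same analysis on the $\calE(1)$-module $NC$ of $H\underline{\bbF_2}_\star$. Its Ext is the $\bbF_2$-dual (shifted) of the positive-cone Ext, by the Anderson/Gorenstein duality of Greenlees--Meier. The differentials are forced by the Leibniz rule with respect to the positive-cone differentials. This produces:
\begin{itemize}
\item the $\rho$-divisible $\bbF_2$-families $\theta/(\rho^k\tau^{4n+3})$;
\item the $\bbZ[\bar{v}_1]$-towers on $\theta/\tau^{4n}$ and $\theta/\tau^{4n+2}$;
\item the truncated family $\theta/(\rho^k\tau^{4n+1})$ with $k\le 2$.
\end{itemize}

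The step I expect to be the main obstacle is the hidden extension
\[
2\cdot\frac{\theta}{\rho^2\tau^{4n+1}}=\frac{\theta}{\tau^{4n+2}}\bar{v}_1.
\]
I would resolve it in one of two ways. One option is to use duality directly: $k\bbR$ is Anderson self-dual up to a shift (Greenlees--Meier). Under that duality the negative cone is the dual of the positive cone, and the extension is then dual to the relation $\rho^3\bar{v}_1=0$ together with the $2\tau^2$ class. The other option is to compare with the cofiber sequence
\[
\Sigma^{2,1}k\bbR\xrightarrow{\bar{v}_1}k\bbR\to H\underline{\bbZ},
\]
using the known $\pi_\star H\underline{\bbZ}$ and its long exact sequence to pin down group orders. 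Finally, I would check consistency of the result under restriction to $ku$ and under passage to fixed points $ko$.
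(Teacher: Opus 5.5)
\textbf{Verdict: genuine gap.} The paper gives no argument here; it only cites Dugger, Bruner--Greenlees and Greenlees--Meier. So your outline has to carry the proof itself, and it breaks down in the negative cone, exactly at the step you flag.

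\emph{Positive cone (repairable).} The differentials $d(\tau)=\rho h_0$ and $d(\tau^2)=\rho^3v_1$ are $\rho$-Bockstein differentials (a $d_1$ and a $d_3$) computing $\Ext$ over $\mathcal{E}^{\bbR}(1)$. They are not Adams differentials. Neither $\tau$ nor $\tau^2$ lies in $\Ext$, whose positive cone is $\bbF_2[\rho,h_0,v_1,\tau^4]/(\rho h_0,\rho^3v_1)\oplus\bbF_2[h_0,v_1,\tau^4]\{\tau^2h_0\}$, and there is no room for Adams differentials on it. An $E_2$-page that contains $\tau$, imposes $\rho h_0=0$, and also has $d_2(\tau)=\rho h_0$ is self-contradictory. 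With that relabelling, your $PC$ answer is right.

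\emph{Negative cone: the duality claims are false.} $\Ext$ of $NC$ is not the $\bbF_2$-dual of the positive-cone $\Ext$. Such a dual would have $h_0$ and $v_1$ acting by division, whereas the statement contains free $\bbZ[\bar v_1]$-modules on $\theta/\tau^{4n}$ and $\theta/\tau^{4n+2}$. The true relation holds only in the $\rho,\tau$ directions: the $\rho$-Bockstein $E_1$-page is $\frac{\bbF_2[\rho,\tau]}{(\rho^\infty,\tau^\infty)}\{\theta\}\otimes\bbF_2[h_0,v_1]$, with $d_1\bigl(\frac{\theta}{\rho^k\tau^{2j}}\bigr)=h_0\frac{\theta}{\rho^{k-1}\tau^{2j+1}}$ and $d_3\bigl(\frac{\theta}{\rho^k\tau^{4j+1}}\bigr)=v_1\frac{\theta}{\rho^{k-3}\tau^{4j+3}}$.

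Likewise, $k\bbR$ is not Anderson self-dual up to a shift. If it were, restricting would make $ku$ a suspension of the coconnective spectrum $I_{\bbZ}ku$, which is impossible. What Greenlees--Meier prove is Gorenstein duality $\Gamma_{\bar v_1}k\bbR\simeq\Sigma^{V}I_{\bbZ}k\bbR$. This ties $\pi_\star k\bbR$ to its $\bar v_1$-local cohomology (its $\bar v_1$-torsion and $\pi_\star K\bbR/\pi_\star k\bbR$), not $NC$ to the dual of $PC$. So your first route to the extension rests on a false premise.

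\emph{The cofiber-sequence route cannot decide the extension.} In degree $(2,4n+5)$ the $\bar v_1$-cofiber sequence gives only $0\to\bbZ\{\bar v_1\frac{\theta}{\tau^{4n+2}}\}\to\pi_{2,4n+5}k\bbR\to\bbF_2\{\frac{\theta}{\rho^2\tau^{4n+1}}\}\to 0$, using $\pi_{3,4n+5}H\underline{\bbZ}=0=\pi_{-1,4n+4}k\bbR$. Both $\bbZ$ and $\bbZ\oplus\bbZ/2$ are compatible with this, and no count of orders separates them.

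\emph{What is missing.} You need the sequence of $C_{2+}\to S^0\xrightarrow{a_\sigma}S^\sigma$, namely $\cdots\to\pi_{s+1,w+1}k\bbR\xrightarrow{\rho}\pi_{s,w}k\bbR\xrightarrow{\Res}\pi_s ku\xrightarrow{\mathrm{tr}}\pi_{s,w+1}k\bbR\xrightarrow{\rho}\cdots$. It has to be used as an argument, not as an after-the-fact consistency check. The steps are:
\begin{enumerate}
\item Since $\pi_{-1,4n+3}k\bbR=0$, the transfer maps onto $\pi_{0,4n+4}k\bbR\cong\bbZ$. Hence $\frac{\theta}{\tau^{4n+2}}=\pm\mathrm{tr}(1)$, and by Frobenius reciprocity $\bar v_1\frac{\theta}{\tau^{4n+2}}=\pm\mathrm{tr}(v_1)$.
\item In odd weight the Weyl action on $\pi_2ku$ is trivial, so $\Res\circ\mathrm{tr}=2$ in degree $(2,4n+5)$.
\item $\Res\colon\pi_{2,4n+5}k\bbR\to\pi_2ku$ is onto, because $\pi_{2,4n+6}k\bbR=0$; the corrected $\Ext$ computation gives this vanishing.
\item Choose $x$ with $\Res(x)=v_1$. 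Since $\pi_{1,4n+4}k\bbR\cong\bbF_2$, we get $\rho(2x)=0$, so $2x\in\mathrm{tr}(\pi_2ku)$.
\item Comparing restrictions then forces $2x=\mathrm{tr}(v_1)=\pm\bar v_1\frac{\theta}{\tau^{4n+2}}$. This is the nonsplit extension, and $x$ maps to $\frac{\theta}{\rho^2\tau^{4n+1}}$ in $H\underline{\bbZ}$.
\end{enumerate}
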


The $RO(C_2)$-graded homotopy groups $\pi_{s, w}^{C_2}k\bbR$ are depicted in \cref{fig:kur}. Each square represents a generator of $\bbZ$, while each dot represents a generator of $\bbF_2$. The black lines of slope 1 indicate multiplication by $\rho$ from the upper right to the lower left. The red lines of slope $1/2$ indicate multiplication by $\bar{v}_1$ from the lower left to the upper right. The names of the generators correspond, under the composite map 
\[
    k \bbR \to H \underline{\bbZ} \to H \underline{\bbF_2},
\]
to the generators with the same names in $H \underline{\bbZ}$ and $H \underline{\bbF_2}$. Here, the first map is the canonical map to the 0-slice, and the second map is induced by the quotient map $\bbZ \to \bbF_2$. Note that $\tau$ is not an actual element in $\pi_\star^{C_2}k\bbR$. The generator $\bar{v}_1$ is an equivariant lift of the classical generator of $\pi_2 ku$. For a detailed discussion of these generators, we refer the reader to \cite[Section 3.1]{lishiwangxu2019hurewicz}.

\begin{rmk}
    We choose the names of our generators in \cref{thm: ku_R computation} so that they are compatible with the map $k\bbR \to H\underline{\bbF_2}$, which is convenient for comparing the Hurewicz images of these two spectra and relating them to the $\bbR$-motivic setting. Some generators may have different names in other references. For example, in \cite{hillhopkinsravenel2016nonexistence, lishiwangxu2019hurewicz}, the generators $\rho$ and $\tau$ are denoted by $a_{\sigma}$ and $u_{\sigma}$, respectively. In this paper, we will use $a_{\sigma}$ to denote the Euler class of $\sigma$ in $\pi_\star^{C_2} S^{0,0}$.
\end{rmk}

\section{The Hurewicz image of \texorpdfstring{$k \bbR$}{kR}} \label{sec:hurewiczKR}
We prove \cref{thm:mainhurewicz} in this section. The following restatement specifies the generators explicitly.

\begin{theorem}\label{thm:krhurewicz}
   The $RO(C_2)$-graded Hurewicz image of $k\bbR$ consists of
\[\im (\iota_{k\bbR} )_{s, w}=\begin{cases}
        \bbF_2\{\rho^{-s}\} \quad &\mathrm{if}\ s=w<0,\\
        \bbZ\{1\} \quad &\mathrm{if}\ s=w=0,\\
        \bbZ\{\frac{\theta}{\tau^{2k-2}}\} \quad &\mathrm{if}\ s=0\ \mathrm{and}\ w=2k\ \mathrm{for}\ k > 0,\\
        \bbZ\{2\tau^{2k}\} \quad &\mathrm{if}\ s=0\ \mathrm{and}\ w=-2k\ \mathrm{for}\ k>0,\\
        \bbF_2\{\frac{\theta}{\rho^\epsilon\tau^{4k+1}}\bar{v}_1^{4k+3}\} \quad &\mathrm{if}\ s=w=8k+6+\epsilon, \\ &\ \ \ \mathrm{for}\ k\geq 0, \epsilon =0, 1,\\
        \bbF_2\{\rho^{2-\epsilon}\tau^{4k}\bar{v}_1^{4l+1}\} & \mathrm{if}\ s=8l+\epsilon\ \mathrm{and}\ s-w=4(k+l)+1 \\ &\ \ \ \mathrm{for}\ k\geq 0, l\geq 0, \epsilon =0, 1, \\
        \bbF_2\{\frac{\theta}{\rho^\epsilon\tau^{-4k-3}}\bar{v}_1^{4l}\} & \mathrm{if}\ s=8l+\epsilon\ \mathrm{and}\ s-w=4(k+l)+1\\ &\ \ \  \mathrm{for}\ k< 0, l\geq 0, \epsilon =0, 1,\\
        \bbF_2\{\rho^{2}\tau^{4k}\bar{v}_1^{4l+2}\} & \mathrm{if}\ s=8l+2 \ \mathrm{and}\ s-w=4(k+l)+2\\ &\ \ \  \mathrm{for}\ k\geq 0, l\geq 0, \\
        \bbF_2\{\frac{\theta}{\tau^{-4k-3}}\bar{v}_1^{4l+1}\} & \mathrm{if}\ s=8l+2 \ \mathrm{and}\ s-w=4(k+l)+2\\ &\ \ \  \mathrm{for}\ k< 0, l\geq 0,\\
        \bbF_2\{\frac{\theta}{\rho^s\tau^{w-s-2}}\} \quad &\mathrm{if}\ 0\leq s< \psi(w-s-1) \ \mathrm{and}\ s-w=4k-1\\ &\ \ \  \mathrm{for\ } k\leq -1,\\
        0\quad &\mathrm{otherwise.}
    \end{cases}\]
\end{theorem}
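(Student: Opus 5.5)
The plan is to compute the image of $\iota_{k\bbR}$ one region of \cref{fig:kur} at a time, using two inputs. The first is the factorization \eqref{eq: kotokR} through $ko_{C_2}$ and the Adams spectral sequences of $ko_{C_2}$ and $k\bbR$ from \cite{guillouhillisaksenravenel2020cohomology}. The second is the composite $S^{0,0}\to k\bbR\to H\underline{\bbZ}$, whose image is known from \cite[Theorem~7.2(1)]{gllmwxyz2026hurewicz}.

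\textbf{Regions seen by $H\underline{\bbZ}$.} First I handle the classes that map isomorphically to $H\underline{\bbZ}$:
\begin{itemize}
\item the negative diagonal $\rho^{-s}$;
\item the stem-zero classes $1$, $\theta/\tau^{2k-2}$ and $2\tau^{2k}$;
\item the $\rho$-divisible classes $\theta/(\rho^s\tau^{w-s-2})$ in coweight $\equiv 3 \pmod 4$.
\end{itemize}
In these bidegrees, \cref{thm: ku_R computation} shows that $k\bbR\to H\underline{\bbZ}$ is injective (an isomorphism onto the relevant summand). Hence the image in $k\bbR$ is exactly the preimage of the known $H\underline{\bbZ}$-image, and the Radon--Hurwitz bound $s<\psi(w-s-1)$ carries over verbatim. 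For the stem-zero $\bbZ$'s, I check surjectivity directly. The classes $a_\sigma$, the transfer/Euler-class products in $\pi^{C_2}_{0,w}S^{0,0}$, and the elements $[C_2]$-type classes hit $2\tau^{2k}$ and $\theta/\tau^{2k-2}$. This is again read off from the $H\underline{\bbZ}$ computation, using that $\bbZ$ is detected there.

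\textbf{The $\bar v_1$-periodic classes.} For the classes in coweights $\equiv 1,2 \pmod 4$ and the diagonal classes at $s=w=8k+6+\epsilon$, I argue in three steps.
\begin{itemize}
\item \emph{Candidates.} Since $k\bbR$ is connective in the slice sense and the classes are $2$-torsion (or $\eta$-multiples), I use Adams filtration. In the $C_2$-equivariant Adams spectral sequence for $S^{0,0}$, the relevant classes are $\rho$-, $\tau^4$- and $h_1$-multiples of lifts of the classical $\mu$-family classes $P^l h_1$ and $P^l h_1^2$. The diagonal classes come from $\rho$-divisible lifts. Their images in $\Ext$ for $k\bbR$ are the named generators $\rho^{2-\epsilon}\tau^{4k}\bar v_1^{4l+1}$, and so on.
\item \emph{Existence.} I construct lifts either by restricting $\bbR$-motivic classes along Betti realization, or via the equivariant Adams periodicity operator of \cref{sec4}. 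I then check that they are permanent cycles with nonzero image.
\item \emph{Nonvanishing.} To see the image is nonzero, I use the underlying map $\Res^{C_2}_e$ to $ku$, or the fixed-point map to $ko$, together with the classical $ko$-Hurewicz image (Adams' $\mu$-family detection). I also use compatibility with complexification $c$ from \cite{guillouhillisaksenravenel2020cohomology}, so that $ko_{C_2}$-detected classes push forward nontrivially.
\end{itemize}

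\textbf{The zero part.} Finally I show that everything else is zero. In each remaining bidegree with $\pi^{C_2}_{s,w}k\bbR\neq0$, I show that no element of $\pi^{C_2}_{s,w}S^{0,0}$ maps nontrivially, using three tools:
\begin{itemize}
\item Adams filtration: the $k\bbR$ class sits in filtration higher than any sphere class can reach after accounting for $\bar v_1$-towers. In particular, the $\bbZ$-towers $\tau^{4k}\bar v_1^n$ with $n>0$ in nonzero stem are not hit because their underlying images in $\pi_*ku$ are outside the (torsion) Hurewicz image of $S^0$.
\item Multiplicativity: if a class times $\rho$ or $\bar v_1$ must vanish, it cannot be in the image.
\item Geometric fixed points $\Phi^{C_2}k\bbR$ and restriction, which together bound the image. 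Since $\Phi^{C_2}$ of the sphere is $S^0$, $\rho$-localization constrains the image in the $\rho$-periodic part.
\end{itemize}

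\textbf{Main obstacle.} I expect the hardest step to be the zero part together with the $s=w=8k+6+\epsilon$ classes. These require knowing precisely which $\rho$-divisible classes in the negative cone of $k\bbR$ are hit, which involves hidden extensions and differentials in the equivariant Adams spectral sequence. I would first attack this by comparing with the $\bbR$-motivic Adams spectral sequence for $k\bbR$ (i.e. $kq$) and its $\rho$-Bockstein, where the relevant charts are fully known.
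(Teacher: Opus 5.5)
Your proposal has genuine gaps: it lists tools but does not supply the two mechanisms the proof rests on.

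\textbf{Existence.} Your existence step never explains how to move in the $\tau^4$-direction, and the tools you name cannot do it. Betti realization of $\bbR$-motivic classes only reaches the positive cone. The operator $P_{C_2}$ of Section~4 only shifts by $\bar{v}_1^4$. So neither produces, for instance, the negative-cone families $\frac{\theta}{\rho^{\epsilon}\tau^{4k+1}}\bar{v}_1^{4l}$ and $\frac{\theta}{\tau^{4k+1}}\bar{v}_1^{4l+1}$ with $l>0$. Your detection tools also fail on these classes. They are torsion in positive stem, so restriction to $ku$ kills them, and fixed points only see weight $0$.

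The missing idea is as follows. By the $\rho$-Bockstein differentials $d_2(\tau^{4k+2})=\rho^2\tau^{4k+1}h_1$ and $d_3(\tau^{4k+2}x_{8k-1})=\rho^3\tau^{4k+1}P^kh_1$, the classes $\{\rho^\epsilon\tau^{4l+1}P^lh_1\}$ are $a_\sigma^3$-torsion. Hence they factor through $\Sigma^{2,3}S^{0,0}/a_\sigma^3$. By Behrens--Shah, $\tau^4$ is a self-equivalence of $S^{0,0}/a_\sigma^3$, so you can compose with $\tau^{4m}$ for every $m\in\bbZ$ before projecting back to $S^{0,0}$. Nontriviality then follows from naturality with $k\bbR/\rho^3$ and the $\tau^4$-periodicity of $\pi_\star k\bbR$. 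Multiplying by $\{\tau h_1\}$, whose image is $\rho\bar{v}_1$, gives coweight $\equiv 2 \pmod 4$.

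Your plan for the diagonal classes $\frac{\theta}{\rho^\epsilon\tau^{4k+1}}\bar{v}_1^{4k+3}$ also cannot work. They lie in the negative cone, which $\bbR$-motivic homotopy does not see. Moreover, $kq$ realizes to $ko_{C_2}$, not $k\bbR$. What is needed is that their $c_\star$-preimage in $ko_{C_2}$ is $\eta^{4k+4}/\rho^{4k+2+\epsilon}$. That class is hit from the sphere by Guillou--Isaksen's computation of the Bredon--Landweber region.

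\textbf{Vanishing.} Your claim that $k\bbR\to H\underline{\bbZ}$ is injective in the relevant bidegrees fails in negative coweight $\equiv 3\pmod 4$. At $(4,9)$, for example, both $\frac{\theta}{\rho^4\tau^3}$ and $\frac{\theta}{\tau^5}\bar{v}_1^2$ are nonzero, and the latter maps to $0$ because $\bar{v}_1\mapsto 0$. So $H\underline{\bbZ}$ gives only an upper bound there, and such $\bar{v}_1$-multiples must be excluded separately.

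Your multiplicativity principle is stated incorrectly. We have $\rho\cdot\rho^2\bar{v}_1=0$, yet $\rho^2\bar{v}_1$ is in the image. The valid form is that $x$ is excluded when $\rho x$ is nonzero and not in the image, and proving the latter is the real work. Geometric fixed points give no constraint in positive stem, since every positive-stem class of $\pi_\star k\bbR$ is $\rho$-power torsion.

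The paper's exclusions all come from the factorization through $ko_{C_2}$:
\begin{itemize}
\item the $ko_{C_2}$-preimages of $\rho^2\tau^{4k}\bar{v}_1^{4l+2}$ and $\frac{\theta}{\tau^{4k+1}}\bar{v}_1^{4l+1}$ are not $\rho$-divisible;
\item $\pi_\star ko_{C_2}$ vanishes in positive coweight $\equiv 3 \pmod 4$;
\item the relevant negative-coweight classes are not in the image of $c_\star$;
\item in coweight $\equiv 0 \pmod 4$, the preimages $\rho^{1+\epsilon}\tau^{4(k+l)}\beta^l$ are torsion-free while the sphere is torsion there.
\end{itemize}

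One part of your vanishing argument does work. Restricting to $ku$ correctly rules out torsion-free classes in positive stem. This is cleaner than the paper's use of rank results plus the $2\tau^{2k}$ trick, provided you check that the negative-cone $\bbZ$'s also restrict nontrivially. For example, $2\tau^{2k}\cdot\frac{\theta}{\tau^{2k-2}}\bar{v}_1^{2k}=4\bar{v}_1^{2k}$ does this.
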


The results and the corresponding proofs are summarized in \cref{table:hurewiczImage}. A diagram illustrating \cref{thm:krhurewicz} can be found in \cref{fig:hurewiczkur}. For readers' convenience, we highlight the following key features of the chart:

\begin{itemize}
    \item Each square represents a generator of $\bbZ$, while each dot represents a generator of $\bbF_2$.
    \item Lines of slope 1 indicate multiplication by $\rho$.
    \item Blue squares and dots are precisely the elements in the Hurewicz image.
    \item Lines indicating multiplication by $\bar{v}_1$ have been omitted compared to \cref{fig:kur}.
\end{itemize}

\begin{table}[ht]
    \centering
    \captionsetup{skip=10pt}
    \setlength{\arrayrulewidth}{1pt}
    \begin{tabular}{c|c|c|c}
        \hline
        \rule{0pt}{3ex} $(s,w)$ & Hurewicz image & in $\pi_\star^{C_2}$ & proof \\ \hline
            $(8l+1-\epsilon, 4(l-k)-\epsilon)$    &  $\rho^{1+\epsilon} \tau^{4k} \bar{v}_1^{4l+1}$  & $\{\rho^{\epsilon}\tau^{4k+1} P^{l}h_1\}$  & \cref{lem:combinetauh1}(1)  \\
            $(8l+2, 4(l-k))$    &   $\rho^2 \tau^{4k} \bar{v}_1^{4l+2}$ & $\{\tau^{4k+2} P^l h_1^2 \}$ & \cref{lem:combine(tauh1)2}(1) \\
            $(8k+6 + \epsilon, 8k+6 + \epsilon)$    &   $\frac{\theta}{\rho^\epsilon\tau^{4k+1}}\bar{v}_1^{4k+3}$ & $\frac{\eta^{4k+4}}{\rho^{4k+2+\epsilon}}$ & \cref{lem:cowt0} \\
            $(8l + \epsilon, 4(l+k)+3+\epsilon)$    &   $\frac{\theta}{\rho^\epsilon\tau^{4k+1}}\bar{v}_1^{4l}$  & $\{\frac{\theta}{\rho^{1+\epsilon}\tau^{4k}}x_{8l-1}\}$ & \cref{lem:combinetauh1}(2) \\
            $(8l+2, 4(l+k+1))$    &   $\frac{\theta}{\tau^{4k+1}}\bar{v}_1^{4l+1}$ & $\{\frac{\theta}{\rho\tau^{4k}}P^l h_1\}$ & \cref{lem:combine(tauh1)2}(2) \\
            $(s,w)$    &   $\frac{\theta}{\rho^s\tau^{w-s-2}}$ & & \cite[Theorem~1.2]{gllmwxyz2026hurewicz} \\ 
            $(-s, -s)$    &   $\rho^{s}$ & $a_{\sigma}^{s}$ & \cite[Theorem~1.2]{gllmwxyz2026hurewicz}  \\
            $(0, 2k)$    &   $\frac{\theta}{\tau^{2k-2}}$    &  $\omega_{-k}$  & \cite[Theorem~7.2]{gllmwxyz2026hurewicz} \\
            $(0, -2k)$    &   $2\tau^{2k}$    &  $\omega_k$ & \cite[Theorem~7.2]{gllmwxyz2026hurewicz} \\
            \hline
    \end{tabular}
    \caption{The $RO(C_2)$-graded Hurewicz image of $k\bbR$}
    \label{table:hurewiczImage}
\end{table}

Since the Hurewicz map of the $C_2$-equivariant Eilenberg--MacLane spectrum $H\underline{\bbZ}$ factors through $k\bbR$,
\[\iota_{H\underline{\bbZ}}: S^{0,0} \overset{\iota_{k\bbR}}{\longrightarrow} k\bbR \longrightarrow H\underline{\bbZ},\]
by \cite[Theorem 7.2(1)]{gllmwxyz2026hurewicz}, the known Hurewicz image of $H\underline{\bbZ}$ forces the result of \cref{thm:krhurewicz} at degrees $s\leq 0$, as well as $s-w=4k-1$ for $k\leq -1$.

\begin{lemma}\label{lem:kohurewicz}
    For $k\geq 0$, the classes $\rho\tau^{4k}\bar{v}_1^{4k+1}, \rho^2\tau^{4k}\bar{v}_1^{4k+2}$ are in the Hurewicz image of $k\bbR$.
\end{lemma}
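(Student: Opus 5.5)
The plan is to reduce to the classical statement of Adams that the $ko$-Hurewicz map $\pi_*S^0\to\pi_*ko$ is surjective onto $\pi_{8k+1}ko\cong\bbZ/2$ and $\pi_{8k+2}ko\cong\bbZ/2$ for all $k\ge 0$. These groups are hit by $\mu_{8k+1}$ and $\eta\mu_{8k+1}$ respectively \cite{adams1966on}.

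First I will compute the bidegrees of the two classes from \cref{thm: ku_R computation}:
\[|\rho\tau^{4k}\bar v_1^{4k+1}|=(-1,-1)+(0,-4k)+(8k+2,4k+1)=(8k+1,0),\]
\[|\rho^2\tau^{4k}\bar v_1^{4k+2}|=(-2,-2)+(0,-4k)+(8k+4,4k+2)=(8k+2,0).\]
Both classes therefore have weight $0$, i.e.\ they live in integer degrees. Hence they lie in $\pi_{8k+1}(k\bbR^{C_2})$ and $\pi_{8k+2}(k\bbR^{C_2})$, and $k\bbR^{C_2}\simeq ko$.

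Next I will check, using \cref{thm: ku_R computation}, that $\pi^{C_2}_{8k+1,0}k\bbR$ and $\pi^{C_2}_{8k+2,0}k\bbR$ are each $\bbF_2$, generated by exactly these classes. In the positive cone, a monomial $\rho^a\tau^{4b}\bar v_1^c$ in bidegree $(n,0)$ satisfies $-a+2c=n$ and $-a-4b+c=0$. The relations $2\rho=0$ and $\bar v_1\rho^3=0$ force $a\le 2$ whenever $c>0$. For $n=8k+1$ the unique solution is $a=1$, $c=4k+1$, $b=k$. For $n=8k+2$ the solution $a=0$ is impossible, since it would need $c=4k+1$ with $4b=4k+1$; the solution $a=2$, $c=4k+2$, $b=k$ remains. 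The summand $\bbZ[\tau^4,\bar v_1]\{2\tau^2\}$ only contributes in even stems with coweight $\equiv 2 \pmod 4$ adjusted by $\bar v_1$, and a short check shows it misses these degrees. The negative cone generators $\theta/(\rho^j\tau^m)\bar v_1^c$ have weight $2+m+j+c>0$, so they contribute nothing in weight $0$. This agrees with $\pi_{8k+1}ko\cong\pi_{8k+2}ko\cong\bbZ/2$.

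Then I will identify the weight-$0$ part of the Hurewicz map with a classical Hurewicz map. The unit $S^{0,0}\to k\bbR$ is a map of ring spectra, so on categorical fixed points it gives a ring map $(S^{0,0})^{C_2}\to k\bbR^{C_2}\simeq ko$. Precomposing with the inflation map $S^0\to (S^{0,0})^{C_2}$, which is the unit of the fixed-point ring spectrum, yields a unit-preserving ring map $S^0\to ko$. Such a map is the classical $ko$-Hurewicz map. So the image of $\pi_{n,0}^{C_2}S^{0,0}\to\pi_{n,0}^{C_2}k\bbR$ contains the image of $\pi_nS^0\to\pi_nko$. This is all of $\pi_nko$ for $n=8k+1,8k+2$, by Adams. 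Combined with the previous step, this shows that $\rho\tau^{4k}\bar v_1^{4k+1}$ and $\rho^2\tau^{4k}\bar v_1^{4k+2}$ are hit, concretely by the inflations of $\mu_{8k+1}$ and $\eta\mu_{8k+1}$.

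The main point needing care is the third step. One must make sure that the equivalence $k\bbR^{C_2}\simeq ko$ is an equivalence of ring spectra, or at least unital, so that the composite from $S^0$ really is the standard unit of $ko$. Otherwise one must check directly that it sends $1$ to a generator, which suffices because any map $S^0\to ko$ hitting $1$ agrees with the unit. I expect to cite \cite{guillouhillisaksenravenel2020cohomology} or \cite{greenleesmeier2017gorenstein} for this compatibility. The degree bookkeeping in the second step is routine.
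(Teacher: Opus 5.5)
Your proposal is correct and follows the same route as the paper: both classes have weight $0$, so they live in $\pi_*(k\bbR^{C_2})\cong\pi_*ko$, and Adams' classical computation of the $ko$-Hurewicz image finishes the argument. The paper's proof is a two-line sketch; your check that $\pi^{C_2}_{8k+1,0}k\bbR$ and $\pi^{C_2}_{8k+2,0}k\bbR$ are $\bbF_2$ generated by exactly these classes, and your inflation/unit-compatibility argument, fill in what the paper leaves implicit.
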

\begin{proof}
    Note that the classes in the lemma have bidegrees $(8k+1, 0)$ and $(8k+2, 0)$, respectively. Since their weight is $0$, they can be identified as generators of 
    \[\pi_* \bigl((k\bbR)^{C_2} \bigr) =\pi_*ko.\]
    The claim then follows from the classical Hurewicz image of $ko$.
\end{proof}

Comparison with the computation of the genuine $C_2$-equivariant Adams spectral sequence of $S^{0,0}$ \cite{guillouisaksen2024c2} shows that the classes $\rho\bar{v}_1$ in degree $(1, 0)$ and $\rho^2\bar{v}_1^2$ in degree $(2, 0)$ are the Hurewicz images of $\{\tau h_1\}$ and $ \{(\tau h_1)^2\}$ in $\pi_\star^{C_2}S^{0,0}$, respectively. Since $\rho\tau h_1$ is a permanent cycle, $\rho^2 \bar{v}_1$ is also in the Hurewicz image. 

\cref{lem:kohurewicz} further implies that there are two infinite families in $\pi_\star^{C_2} S^{0,0}$ detected by $k\bbR$, which are lifts of Adams' $\mu$-family in the classical stable stems. Comparing with the genuine $C_2$-equivariant Adams spectral sequence, the only possible candidates are the families $\{\tau^{4k+1}P^{k}h_1\}$ and $\{\tau^{4k+2}P^{k}h_1^2\}$ for $k\geq 0$.

Moreover, by \cite{guillouisaksen2024c2}, \cite{cohenlinmahowald1988adams}, and \cite{ma2026borel}, there are $\rho$-Bockstein differentials
\begin{equation}\label{eq:rhobock}
    d_2(\tau^{4k+2}) =\rho^2 \tau^{4k+1} h_1
\end{equation}
and for $k>0$, 
\begin{equation} \label{eqn:rho_Bockstein}
    d_3(\tau^{4k+2}x_{8k-1})=\rho^3\tau^{4k+1} P^{k}h_1.
\end{equation}
where $x_{8k-1}$ is the order 2 element at stem $8k-1$ of the 2-primary image of $J$. Comparing with the computations in \cite{guillouhillisaksenravenel2020cohomology}, we have that $\rho\tau^{4k+1}P^kh_1$ is also a surviving permanent cycle for all $k\geq 0$, with nontrivial image $\rho^2\tau^{4k}\bar{v}_1^{4k+1}$ in $k\bbR$.

\cref{thm:krhurewicz} and \cref{fig:hurewiczkur} indicate that, besides the classes in $\im(\iota_{H\underline{\bbZ}})$ and exceptions at coweight 0, the Hurewicz image of $k\bbR$ is generated by the classes in \cref{lem:kohurewicz} under $\tau^4$-periodicity. The next few lemmas are devoted to proving this.

\begin{lemma}\label{lem:combinetauh1}
    Let $\epsilon=0$ or $1$. For $k, l\geq 0$,
\begin{enumerate}
\item the classes
\[
\rho^{1+\epsilon}\tau^{4k}\bar{v}_1^{4l+1}
\]
are in the Hurewicz image of $k\bbR$. Their preimages are the families
\[
\{\rho^\epsilon\tau^{4k+1}P^lh_1\}
\subseteq \pi_\star^{C_2}S^{0,0}
\]
which are lifts of the classical $\mu$-family.
\item the classes
\[
\frac{\theta}{\rho^{1-\epsilon}\tau^{4k+1}}\bar{v}_1^{4l}
\]
are in the Hurewicz image of $k\bbR$.
\end{enumerate}
\end{lemma}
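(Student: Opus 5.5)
Part (1) will follow from the base cases $k=0$ together with $\tau^4$-periodicity, and part (2) from the same elements by a division argument in the negative cone.

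\emph{Base case $k=0$, $l$ arbitrary.} \cref{lem:kohurewicz} and the $\rho$-Bockstein discussion after it give that $\rho\tau^{4l}\bar v_1^{4l+1}$ and $\rho^2\tau^{4l}\bar v_1^{4l+1}$ are images of $\{\tau^{4l+1}P^lh_1\}$ and $\{\rho\tau^{4l+1}P^lh_1\}$. These sit in coweight $4l+1$ rather than $1$. So I will reindex: the class $\rho^{1+\epsilon}\tau^{4k}\bar v_1^{4l+1}$ in the statement has degree $(8l+1-\epsilon,\,4(l-k)-\epsilon)$, and the table entry indicates the preimage $\{\rho^\epsilon\tau^{4k+1}P^lh_1\}$. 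The $\tau$-power and the $P$-power are then independent, and I need permanence of $\tau^{4k+1}P^lh_1$ for all $k\ge 0$, not only $k=l$.

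\emph{Permanence for all $k$.} In the $\bbR$-motivic and $C_2$ Adams spectral sequences, $\tau^4$ is not a permanent cycle. However, the relevant classes $\tau^{4k+1}P^lh_1$ are $\rho$-torsion-free only up to the pattern in \eqref{eq:rhobock}–\eqref{eqn:rho_Bockstein}. I would argue permanence via the $C_2$-equivariant Adams spectral sequence of \cite{guillouisaksen2024c2} / \cite{ma2026borel}. There, $\tau^{4}$-multiplication acts through the Borel/James periodicity of the cofiber of $a_\sigma^n$ (truncated projective spectra). I would show that all possible targets of differentials on $\rho^\epsilon\tau^{4k+1}P^lh_1$ vanish by sparseness, using that their $k\bbR$-images $\rho^{1+\epsilon}\tau^{4k}\bar v_1^{4l+1}$ are nonzero by \cref{thm: ku_R computation}. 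A map of Adams spectral sequences to that of $k\bbR$ computed in \cite{guillouhillisaksenravenel2020cohomology}, in which these classes are permanent and detect the named homotopy, then gives both permanence and nontriviality of the Hurewicz image simultaneously. The underlying class is $P^lh_1$ (restriction sends $\tau\mapsto 1$, $\rho\mapsto 0$), which detects $\mu_{8l+1}$, giving the lift statement.

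\emph{Part (2).} Here $\theta/(\rho^{1-\epsilon}\tau^{4k+1})\,\bar v_1^{4l}$ lies in the negative cone. I would use the candidate $\{\frac{\theta}{\rho^{2-\epsilon}\tau^{4k}}x_{8l-1}\}$ from the table, for $l>0$; the $l=0$ cases are $H\underline{\bbZ}$-detected. One approach is to use the cofiber sequence $S^{-\sigma}\xrightarrow{a_\sigma}S^0\to C_2{}_+$ and the fact that negative-cone classes are $\rho$-divisible: the element $x_{8l-1}$ (image of $J$, order 2) restricts/transfers, and $\theta/\rho^{j}\tau^{4k}\cdot x$ is realized by a Toda bracket or transfer. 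The key check is that under $k\bbR$ it maps to the stated class. For this I would use the $\rho$-Bockstein differential \eqref{eqn:rho_Bockstein}: $d_3(\tau^{4l+2}x_{8l-1})=\rho^3\tau^{4l+1}P^lh_1$ is precisely the relation making the negative-cone element $\frac{\theta}{\rho^j\tau^{\dots}}x_{8l-1}$ survive. I would compare with the pattern $\frac{\theta}{\rho^{k}\tau^{4n+1}}\bar v_1$ in $NC$ from \cref{thm: ku_R computation}, via the $k\bbR$ Adams chart.

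\emph{Main obstacle.} The hardest step is the identification of the $k\bbR$-image in part (2), i.e.\ the hidden extension relating $x_{8l-1}$ (invisible in $k\bbR$ classically) to $\bar v_1^{4l}$ in the negative cone. I would first try to deduce it by applying $\tau^4$-periodicity and duality to part (1): use Anderson/Gorenstein duality of $k\bbR$ \cite{greenleesmeier2017gorenstein}, which exchanges positive and negative cones. Alternatively, I would use the long exact sequence for $a_\sigma$ in $k\bbR$, where $\frac{\theta}{\rho\tau^{4k}}\cdot(-)$ connects the two cones.
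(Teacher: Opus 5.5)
There is a genuine gap, and it is the same in both parts: you never produce homotopy classes in the shifted degrees. In part (1) you need permanence of $\rho^\epsilon\tau^{4k+1}P^lh_1$ off the diagonal $k=l$. (The classes supplied by \cref{lem:kohurewicz} and the subsequent Bockstein discussion are the diagonal ones $\rho^{1+\epsilon}\tau^{4l}\bar v_1^{4l+1}$ in degree $(8l+1-\epsilon,-\epsilon)$, not a ``$k=0$'' base case.) You propose to get this by sparseness, ``using that the $k\bbR$-images are nonzero.'' That inference is invalid. A differential $d_r(x)=y$ in the Adams spectral sequence of $S^{0,0}$ is compatible with $x$ mapping to a nonzero permanent cycle for $k\bbR$, because it only forces the image of $y$ in the $E_r$-page for $k\bbR$ to vanish. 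So the comparison map gives nontriviality of the Hurewicz image only once permanence is already known; it cannot deliver both at once. You also do not identify the possible targets, let alone rule them out uniformly in $k$ and $l$. In part (2) you list strategies (transfer, Toda brackets, Gorenstein duality, the $a_\sigma$-sequence) but leave both the construction of the class and the computation of its $k\bbR$-image open. So for $l>0$ nothing is established.

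The missing idea is to realize $\tau^4$ as an actual map of spectra after killing $a_\sigma^3$. You allude to this (James periodicity), but only as background for an Ext-level argument. By \cite[Theorem~7.7]{behrensshah2020c2} there is a self-equivalence $\tau^4\colon \Sigma^{0,-4}S^{0,0}/a_\sigma^3\to S^{0,0}/a_\sigma^3$. The differentials \eqref{eq:rhobock} and \eqref{eqn:rho_Bockstein} that you quote show that the diagonal classes $\{\rho^\epsilon\tau^{4l+1}P^lh_1\}$ are $a_\sigma^3$-torsion. Hence they lift to $\tilde f\colon S^{8l+1-\epsilon,-\epsilon}\to\Sigma^{2,3}S^{0,0}/a_\sigma^3$. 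Composing with $\tau^{4(k-l)}$ (either sign is allowed, since it is an equivalence) and then with the projection $p$ gives classes in the degrees of part (1). Composing instead with $\tau^{-4(k+l+1)}$ moves into the negative cone and gives part (2). Naturality after smashing with $k\bbR$, together with the corresponding $\tau^4$-periodicity for $k\bbR/\rho^3$ read off from \cref{thm: ku_R computation}, shows the Hurewicz images are the stated nonzero classes. Only then does one compare with the $C_2$-equivariant Adams spectral sequence to name the classes $\{\rho^\epsilon\tau^{4k+1}P^lh_1\}$ and $\{\frac{\theta}{\rho^{2-\epsilon}\tau^{4k}}x_{8l-1}\}$. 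Permanence is thus an output of having a homotopy class with the right image, not an input. In particular, the $d_3$ on $\tau^{4l+2}x_{8l-1}$ is not what makes a negative-cone element ``survive''; it is the $a_\sigma^3$-torsion input that makes the lift $\tilde f$ exist.
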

\begin{proof}
    By \cite[Theorem~7.7]{behrensshah2020c2}, there is a $\tau^4$-self map
    \[\tau^4: \Sigma^{0, -4} S^{0,0}/a_\sigma^3 \to  S^{0,0}/a_\sigma^3,\]
    which is a homotopy equivalence. Since $\{\rho^\epsilon \tau^{4l+1} P^l h_1\}$ is $a_\sigma^3$-torsion by \eqref{eq:rhobock} and \eqref{eqn:rho_Bockstein}, it admits a factorization
    \[\begin{tikzcd}
{S^{8l+1-\epsilon, -\epsilon}} \arrow[rd, "\{\rho^\epsilon \tau^{4l+1} P^l h_1\}"] \arrow[d, "\tilde{f}"', dashed] &     \\
{\Sigma^{2, 3}S^{0,0}/a_\sigma^3} \arrow[r, "p", two heads]     & S^{0,0},
\end{tikzcd}\]
    where $p$ is the natural projection map. Since $\iota_{k\bbR}(\rho^\epsilon \tau^{4l+1} P^l h_1)=\rho^{1+\epsilon} \tau^{4l}\bar{v}_1^{4l+1}$, we have the following commutative diagram after smashing with $k\bbR$:
    \[\begin{tikzcd}
                                                                                                                                                &                                  & {\Sigma^{8l+1-\epsilon, -\epsilon}k\bbR} \arrow[rd, "\rho^{1+\epsilon}\tau^{4l}\bar{v}_1^{4l+1}"] \arrow[d, "\tilde{f}"', dashed] &       \\
{S^{8l+1-\epsilon, -\epsilon}} \arrow[rd, "\rho^\epsilon \tau^{4l+1} P^l h_1"] \arrow[d, "\tilde{f}"', dashed] \arrow[rru, "\iota_{k\bbR}"]                                          &                                  & {\Sigma^{2, 3}k\bbR/\rho^3} \arrow[r, "p"]                                                               & k\bbR. \\
{\Sigma^{2, 3}S^{0,0}/a_\sigma^3} \arrow[r, "p", two heads] \arrow[rru, "\iota_{k\bbR}"'] \arrow["\tau^{4m}"', loop, distance=2em, in=305, out=235] & S^{0,0} \arrow[rru, "\iota_{k\bbR}"] &                                                                                                          &      
\end{tikzcd}\]
\begin{enumerate}
    \item By \cref{thm: ku_R computation}, for $k\geq0$, by $\tau^4$-periodicity,
\[p\circ \tau^{4(k-l)}\circ \iota_{k\bbR}(\tilde{f})\neq 0.\]
By commutativity,
\[\iota_{k\bbR}(p\circ \tau^{4(k-l)}\circ \tilde{f})\neq 0,\]
which gives a nontrivial infinite family of elements in $\pi_\star^{C_2}S^{0,0}$
\[S^{8l+1-\epsilon, 4l-4k-\epsilon} \overset{\tilde{f}}{\longrightarrow} \Sigma^{2, 3+4l-4k}S^{0,0}/a_\sigma^3 \overset{\tau^{4(k-l)}}{\longrightarrow} \Sigma^{2, 3}S^{0,0}/a_\sigma^3 \overset{p}{\twoheadrightarrow} S^{0,0},\]
whose Hurewicz images are $\rho^{1+\epsilon}\tau^{4k} \overline{v}_1^{4l+1}$ as desired. Comparing with the genuine $C_2$-equivariant Adams spectral sequence, the only possible family is
\[
\{\rho^\epsilon\tau^{4k+1}P^lh_1\}
\subseteq \pi_\star^{C_2}S^{0,0}.
\]
    \item By \cref{thm: ku_R computation}, for $k\geq 0$, \[\rho^{1+\epsilon}\tau^{4l}\bar{v}_1^{4l+1}=\tau^{4(k+l+1)} \cdot \frac{\theta}{\rho^{1-\epsilon}\tau^{4k+1}} \bar{v}_1^{4l}\]
in $k\bbR$. It follows that
\[p\circ \tau^{-4(k+l+1)}\circ \iota_{k\bbR}(\tilde{f})\neq 0.\]
By commutativity,
\[\iota_{k\bbR}(p\circ \tau^{-4(k+l+1)}\circ \tilde{f})\neq 0,\]
which gives a nontrivial infinite family of elements in $\pi_\star^{C_2}S^{0,0}$
\[S^{8l+1-\epsilon, 4l+4k+4-\epsilon} \overset{\tilde{f}}{\longrightarrow} \Sigma^{2, 3+4l+4k+4}S^{0,0}/a_\sigma^3 \overset{\tau^{-4(k+l+1)}}{\longrightarrow} \Sigma^{2, 3}S^{0,0}/a_\sigma^3 \overset{p}{\twoheadrightarrow} S^{0,0},\]
whose Hurewicz images are $\frac{\theta}{\rho^{1-\epsilon}\tau^{4k+1}}\bar{v}_1^{4l}$ as desired. Based on the computation of $\rho$-Bockstein spectral sequences \cite{guillouisaksen2024c2}, for $l>0$, the only possible element whose Hurewicz image equals $\frac{\theta}{\rho^{1-\epsilon} \tau^{4k+1}}\bar{v}_1^{4l}$ is $\{\frac{\theta}{\rho^{2-\epsilon}\tau^{4k}}x_{8l-1}\}$, where $x_{8l-1}$ denotes the order 2 element at stem $8l-1$ of the image of $J$ in the classical 2-primary Adams spectral sequence.
\end{enumerate}
\end{proof}

\begin{lemma}\label{lem:combine(tauh1)2}
For $k, l\geq 0$,
~\begin{enumerate}
    \item the classes $\rho^{2}\tau^{4k}\bar{v}_1^{4l+2}$ are in the Hurewicz image of $k\bbR$.
    \item the classes $\frac{\theta}{\tau^{4k+1}}\bar{v}_1^{4l+1}$ are in the Hurewicz image of $k\bbR$.
\end{enumerate}
\end{lemma}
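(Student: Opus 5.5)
I will follow the proof of \cref{lem:combinetauh1} closely, replacing the family $\{\rho^\epsilon\tau^{4l+1}P^lh_1\}$ by the family $\{\tau^{4l+2}P^lh_1^2\}$ in degree $(8l+2,0)$. By \cref{lem:kohurewicz}, together with the comparison with the genuine $C_2$-equivariant Adams spectral sequence discussed after it, this family has Hurewicz image $\rho^2\tau^{4l}\bar{v}_1^{4l+2}$.

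The first step is to show that this family is $a_\sigma^3$-torsion, so that it factors through $S^{0,0}/a_\sigma^3$. I expect this to follow from the relation $\{\tau^{4l+2}P^lh_1^2\} = \eta\cdot\{\tau^{4l+1}P^lh_1\}$, up to the $\tau h_1$ versus $h_1$ bookkeeping; for $l=0$ this is $\{(\tau h_1)^2\}$. The factor $\{\tau^{4l+1}P^lh_1\}$ is $a_\sigma^3$-torsion by \eqref{eq:rhobock} and \eqref{eqn:rho_Bockstein}, so the product is as well. Alternatively, $h_1^2$ times the $\rho$-Bockstein differentials gives $a_\sigma^3$-torsion directly.

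Given the torsion, I choose a lift $\tilde f\colon S^{8l+2,0}\to\Sigma^{?,?}S^{0,0}/a_\sigma^3$ with $p\circ\tilde f$ equal to the class, with the suspension shifted by $(1,1)$ from the $h_1$ case. Then I apply the equivalence $\tau^4$ of \cite[Theorem~7.7]{behrensshah2020c2} and smash with $k\bbR$, exactly as in the commutative diagram of \cref{lem:combinetauh1}.

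For part (1), I use $\tau^{4(k-l)}$. On the $k\bbR$ side, \cref{thm: ku_R computation} gives that $\tau^4$ acts on $\rho^2\tau^{4l}\bar{v}_1^{4l+2}$ by shifting to $\rho^2\tau^{4k}\bar{v}_1^{4l+2}$, which is nonzero since $\rho^2\bar v_1^{j}\neq0$ in the positive cone. The main point is that the image of $\tilde f$ in $\pi_\star k\bbR/\rho^3$ maps under $p$ to a nonzero class. One must check that $\tau^4$-multiplication on $k\bbR/\rho^3$ is compatible with the positive-cone formula, rather than with a correction by $\rho$-divisible negative-cone classes. I will argue this by noting that $k\bbR/\rho^3$ is $\tau^4$-periodic and its homotopy is computed by the long exact sequence from \cref{thm: ku_R computation}.

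For part (2), I use $\tau^{-4(k+l+1)}$, which requires the identity
\[\rho^2\tau^{4l}\bar v_1^{4l+2} \;\text{``=''}\; \tau^{4(k+l+1)}\cdot\frac{\theta}{\tau^{4k+1}}\bar v_1^{4l+1}\]
inside $k\bbR/\rho^3$. Here the relation $2\cdot\frac{\theta}{\rho^2\tau^{4n+1}}=\frac{\theta}{\tau^{4n+2}}\bar v_1$ and the relation $\bar v_1\rho^3=0$ have to be used to match the $\bar v_1$-powers, since the target $\frac{\theta}{\tau^{4k+1}}\bar v_1^{4l+1}$ is the $\rho^2$-multiple of $\frac{\theta}{\rho^2\tau^{4k+1}}\bar v_1^{4l+1}$. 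I expect this matching step to be the main obstacle. Unlike the case $\epsilon$ in \cref{lem:combinetauh1}, the class sits where the negative-cone relation mixes $2$-multiples with $\bar v_1$-multiples. If the direct identification fails, I would try instead to multiply the part (2) families of \cref{lem:combinetauh1} by the Hurewicz image of $\eta$-type classes (for instance $\{\tau h_1\}\mapsto\rho\bar v_1$) and use multiplicativity of $\iota_{k\bbR}$. For example, $\frac{\theta}{\rho\tau^{4k+1}}\bar v_1^{4l}\cdot\rho\bar v_1=\frac{\theta}{\tau^{4k+1}}\bar v_1^{4l+1}$, which would give part (2) immediately and avoid the torsion argument altogether. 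Similarly, for part (1), $\rho\tau^{4k}\bar v_1^{4l+1}\cdot\rho\bar v_1=\rho^2\tau^{4k}\bar v_1^{4l+2}$ gives a shortcut.
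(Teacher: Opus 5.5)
Your closing shortcut is exactly the paper's proof, and it is complete on its own: $\iota_{k\bbR}$ is a ring map, so multiplying $\rho\bar{v}_1=\iota_{k\bbR}(\{\tau h_1\})$ by $\rho\tau^{4k}\bar{v}_1^{4l+1}$ (from \cref{lem:combinetauh1}(1)) gives part (1), and multiplying it by $\frac{\theta}{\rho\tau^{4k+1}}\bar{v}_1^{4l}$ (from \cref{lem:combinetauh1}(2)) gives part (2). Make that the proof and drop the $a_\sigma^3$-torsion/$\tau^4$-periodicity route; it is unnecessary, and its key identification inside $k\bbR/\rho^3$, which you rightly flag as the main obstacle, is never actually verified.
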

\begin{proof}
~\begin{enumerate}
    \item Since $\iota_{k\bbR}$ is a ring map and both $\rho \bar{v}_1$ and $\rho \tau^{4k}\bar{v}_1^{4l+1}$ are in the Hurewicz image by \cref{lem:kohurewicz} and \cref{lem:combinetauh1}(1), their product must also be in the Hurewicz image. In particular, the preimage of $\rho^{2}\tau^{4k}\bar{v}_1^{4l+2}$ is $\{\tau h_1\}\cdot \{\tau^{4k+1}P^lh_1\}$.
    \item Since $\iota_{k\bbR}$ is a ring map and the classes $\frac{\theta}{\rho\tau^{4k+1}}\bar{v}_1^{4l}$ are in the Hurewicz image by \cref{lem:combinetauh1}(2), their products with $\rho\bar{v}_1$ must be in the Hurewicz image.
\end{enumerate}  
\end{proof}


\begin{lemma}\label{lem:cowt0}
    For $k \geq 0$, $\epsilon=0,1$, the elements $\frac{\theta}{\rho^\epsilon\tau^{4k+1}}\bar{v}_1^{4k+3}$ are in the Hurewicz image of $k\bbR$.
\end{lemma}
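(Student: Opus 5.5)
The table identifies the preimage as $\eta^{4k+4}/\rho^{4k+2+\epsilon}$, that is, an $a_\sigma$-divisible element on the coweight-$0$ line of $\pi_\star^{C_2}S^{0,0}$. My plan is to construct such an element for $\epsilon=1$ and check that its $k\bbR$-Hurewicz image is nonzero. Since
\[
\rho\cdot \frac{\theta}{\rho\tau^{4k+1}}\bar{v}_1^{4k+3}=\frac{\theta}{\tau^{4k+1}}\bar{v}_1^{4k+3}
\]
in $k\bbR$ by \cref{thm: ku_R computation}, multiplying the $\epsilon=1$ preimage by $a_\sigma$ then handles $\epsilon=0$. So everything reduces to the bidegree $(8k+7,8k+7)$.

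The targets lie in the negative cone. I would therefore detect them through the isotropy separation cofiber sequence
\[
EC_{2+}\wedge X\to X\to \widetilde{EC_2}\wedge X,
\]
applied to $X=S^{0,0}$ and $X=k\bbR$. Here $\widetilde{EC_2}\wedge X\simeq a_\sigma^{-1}X$.

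In $k\bbR$, the classes $\theta/(\rho^j\tau^m)$ are the $\rho$-divisible classes. Each such class is in the image of the connecting map from $\pi_{\star+1}(a_\sigma^{-1}k\bbR)$. Equivalently, it comes from $\pi_\star(EC_{2+}\wedge k\bbR)$.

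For fixed coweight, $\pi_\star(EC_{2+}\wedge X)$ is the homotopy of a Thom spectrum over $\mathbb{R}P^\infty$. For the sphere in coweight $0$ this is a suspension of $\pi_*\mathbb{R}P^\infty_{-\infty}$. For $k\bbR$ it is the corresponding $ku$-homology with the conjugation twist, which is a $ko$-homology of a stunted projective spectrum.

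The first step is to write down the commutative ladder of these cofiber sequences induced by $\iota_{k\bbR}$. Using this ladder, I identify the target class with the image, under the connecting map, of an explicit class in $\pi_{8k+8}\Phi^{C_2}k\bbR$. The second step is to produce the source class in the sphere. The $a_\sigma$-local sphere has $\pi_*\Phi^{C_2}S^{0,0}\cong\pi_*S^0$, and $\eta$ maps to $2$ there up to a unit. So $\eta^{4k+4}$ becomes $2^{4k+4}$ times a unit after inverting $a_\sigma$.

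The element $\eta^{4k+4}/a_\sigma^{4k+3}$ should then be the connecting image of a class whose image in $\Phi^{C_2}$ is detected by the root-invariant/Mahowald-invariant pattern of $2^{4k+4}$ in the $a_\sigma$-Bockstein (equivalently $\rho$-Bockstein) spectral sequence. Concretely, I would read off from the $\rho$-Bockstein computations of \cite{guillouisaksen2024c2} and the coweight-$0$ analysis that $\eta^{4k+4}$ is divisible by $\rho^{4k+3}$. The element $\eta^{4k+4}/a_\sigma^{4k+3}$ is then defined as a lift through $a_\sigma^{4k+3}$ on the coweight-$0$ line, with ambiguity only from $a_\sigma^{4k+3}$-torsion.

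The third step is to compute the image in $k\bbR$ by naturality. Multiplying the candidate by $a_\sigma^{4k+3}$ gives $\eta^{4k+4}$, whose image in $k\bbR$ is zero. So I cannot detect the class directly. Instead I compare Borel parts: the image of the candidate in $ko_*$ of the stunted projective spectrum should be the classical $ko$-Hurewicz image of the element of $\pi_*$ of the stunted projective spectrum that lifts $\eta^{4k+4}$. That classical image is nonzero by Mahowald's computation of $ko_*\mathbb{R}P$ and the $J$-homology of stunted projective spaces, since it is a top $\bbZ/2$ in the $\bar v_1^{4k+3}$-periodic tower.

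Finally, any indeterminacy consists of $a_\sigma^{4k+3}$-torsion classes. I would need to check that these map trivially to the $\bbF_2$ in $\pi^{C_2}_{8k+7,8k+7}k\bbR$, so that the nonvanishing statement does not depend on the choice of lift.

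I expect the main obstacle to be this comparison step: pinning down that the specific class in $ko_{8k+7}$ of the relevant stunted projective spectrum lies in the Hurewicz image, with the correct filtration and James-periodicity shift. If the Borel comparison is too hard to pin down, I would fall back on a direct argument in the genuine $C_2$ Adams spectral sequence. There I would show that $\eta^{4k+4}/\rho^{4k+3}$ is detected in $\Ext_{C_2}$ by a class mapping to the generator of the corresponding $\Ext$ group for $k\bbR$ computed in \cite{guillouhillisaksenravenel2020cohomology}, and that no differential or filtration jump kills it.
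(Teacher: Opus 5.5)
\textbf{There is a genuine gap: the nonvanishing of the Hurewicz image is never established.} Your reduction of $\epsilon=0$ to $\epsilon=1$ (multiply by $a_\sigma$) is fine, and $\eta^{4k+4}/a_\sigma^{4k+3}$ is the right candidate. But the step that proves the lemma, namely showing this class has nonzero image in $\pi^{C_2}_{8k+7,8k+7}k\bbR$, is not carried out. The mechanism you sketch for it also fails as stated.
\begin{itemize}
\item Since $\pi^{C_2}_{1,1}k\bbR=0$, $\eta$ dies in $k\bbR$. So the relation $a_\sigma^{4k+3}x=\eta^{4k+4}$ says nothing about $\iota_{k\bbR}(x)$, as you note.
\item The isotropy separation ladder does not rescue this. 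Your $x$ is not $a_\sigma$-power torsion: its geometric fixed points are $\pm 2^{4k+4}\neq 0$. Hence it does not come from $\pi_\star(EC_{2+}\wedge S^{0,0})$ and is not a ``connecting image'' of anything.
\item On the $k\bbR$ side, the connecting map lands in $\pi_\star(EC_{2+}\wedge k\bbR)$, and its image is exactly what dies in $k\bbR$. Moreover, for the degree $(8k+7,8k+7)$ its source has coweight $1$, i.e.\ it is $\pi_1\Phi^{C_2}k\bbR$. This group vanishes: inverting $\rho$ in $\pi^{C_2}_\star k\bbR$ gives $\pi_*\Phi^{C_2}k\bbR\cong\bbF_2[b]$ with $|b|=4$. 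It is not $\pi_{8k+8}\Phi^{C_2}k\bbR$.
\item Relatedly, $\pi_\star(EC_{2+}\wedge X)$ is the homotopy of a single Thom spectrum over $\mathbb{R}P^\infty$ for fixed \emph{weight}, not fixed coweight.
\end{itemize}
What remains is the comparison with $ko$-homology of stunted projective spectra, which you flag as the main obstacle and leave open. The Adams spectral sequence fallback is likewise only asserted.

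\textbf{The missing idea is to factor the unit map as $S^{0,0}\to ko_{C_2}\xrightarrow{c}k\bbR$.} In $ko_{C_2}$ the class $\eta$ survives, and $\eta^{4k+4}$ is $\rho^{4k+2+\epsilon}$-divisible. The computation of \cite{guillouhillisaksenravenel2020cohomology} shows that $c_\star$ carries $\eta^{4k+4}/\rho^{4k+2+\epsilon}$ to $\frac{\theta}{\rho^\epsilon\tau^{4k+1}}\bar{v}_1^{4k+3}$. This is exactly the information lost when you pass directly to $k\bbR$, where $\eta=0$. The paper then cites the Bredon--Landweber region computation of \cite{guillouisaksen2020bredon} for a class in $\pi^{C_2}_\star S^{0,0}$ whose $ko_{C_2}$-Hurewicz image is $\eta^{4k+4}/\rho^{4k+2+\epsilon}$. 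This settles both values of $\epsilon$ at once, with no Borel or stunted-projective comparison needed.
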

\begin{proof}
    By \cite{guillouhillisaksenravenel2020cohomology}, at coweight 0, the preimage of $\frac{\theta}{\rho^\epsilon\tau^{4k+1}}\bar{v}_1^{4k+3}$ under the map 
        \[c_\star: \pi_\star^{C_2} ko_{C_2} \to \pi_\star^{C_2} k\bbR\]
    is $\frac{\eta^{4k+4}}{\rho^{4k+2+\epsilon}}$. The claim then follows from the computation of the Bredon-Landweber region of $S^{0,0}$ in \cite{guillouisaksen2020bredon}.
\end{proof}

\begin{proof}[Proof of \cref{thm:krhurewicz}]
    By \cref{lem:kohurewicz}--\cref{lem:cowt0}, it is enough to show that the other classes are not in the Hurewicz image of $k\bbR$.

    By \cite{greenleesquigley2023ranks, belmontxuzhang2024reduced}, the only torsion-free classes in $\pi_{s, w}^{C_2}S^{0,0}$ for $s>0$ are concentrated in degree $s=w$. We claim that the classes
    \[n\cdot \frac{\theta}{\tau^{2k-2}}\bar{v}_1^{2k}, \quad k\geq 1, \, n \in \bbZ\setminus \{0\}\]
    are not in the Hurewicz image. Otherwise, since $2\tau^{2k}$ is in the Hurewicz image, the product
    \[2\tau^{2k} \cdot n\cdot \frac{\theta}{\tau^{2k-2}}\bar{v}_1^{2k} =4n \cdot\bar{v}_1^{2k}\]
    would be in the Hurewicz image, which is impossible for degree reasons.

    We divide the remaining 2-torsion classes into the following three cases.

    \begin{itemize}
        \item \textbf{coweight} $\mathbf{\equiv 2} \pmod 4$. 
        By \cref{lem:combine(tauh1)2}, the preimages of $\rho^2\tau^{4k}\bar{v}_1^{4l+2}$ and $\frac{\theta}{\tau^{-4k-3}}\bar{v}_1^{4l+1}$ under the map 
        \[c_\star: \pi_\star^{C_2} ko_{C_2} \to \pi_\star^{C_2} k\bbR\]
        are $\tau^{4k+2}h_1^2 (\tau^4\beta)^l$ and $\frac{\frac{\theta}{\tau^{-4k-4}}\cdot h_1 (\tau^4\beta)^l}{\rho}$, respectively. By the computation in \cite[Section~11]{guillouhillisaksenravenel2020cohomology}, these two classes are not $\rho$-divisible in $ko_{C_2}$. It follows that $\rho\tau^{4k}\bar{v}_1^{4l+2}$ and $\frac{\theta}{\rho\tau^{-4k-3}}\bar{v}_1^{4l+1}$ cannot be in the Hurewicz image of $k\bbR$.

        \item \textbf{coweight} $\mathbf{\equiv 3} \pmod 4$. By \cite[Section~11.6]{guillouhillisaksenravenel2020cohomology} $\pi_\star^{C_2}ko_{C_2}$ is trivial at positive coweights congruent to 3 $\pmod4$. By the factorization in \eqref{eq: kotokR}, $k\bbR$ must have trivial Hurewicz images in these degrees as well.

        For the negative coweights, by \cite[Table~11]{guillouhillisaksenravenel2020cohomology}, the classes $\frac{\theta}{\tau^{4k+2}}\rho^{1+\epsilon}\bar{v}_1^{4l+3}$ are not in the image of
        \[c_\star: \pi_\star^{C_2} ko_{C_2} \to \pi_\star^{C_2} k\bbR.\]
        Therefore, these classes do not lie in the Hurewicz image.
        
        \item \textbf{coweight} $\mathbf{\equiv 0} \pmod 4$. For the negative coweights, it follows from \cite[Section~11.3]{guillouhillisaksenravenel2020cohomology} that $ko_{C_2}$ has no generators at stems congruent to $2, 3 \pmod 4$ and coweight congruent to $0 \pmod 4$. By the factorization in \eqref{eq: kotokR}, $k\bbR$ must have trivial Hurewicz image in these degrees.

        For the positive coweights, the preimage of $\rho^{1+\epsilon}\tau^{4k}\bar{v}_1^{4l}$ under \[c_\star: \pi_\star^{C_2} ko_{C_2} \to \pi_\star^{C_2} k\bbR\] 
        is $\rho^{1+\epsilon}\tau^{4(k+l)}\beta^l$, following the notation of \cite{guillouhillisaksenravenel2020cohomology}. Note that, for all $k, l\geq 0$, the classes $\rho^{1+\epsilon}\tau^{4(k+l)}\beta^l$ are torsion-free, while $\pi_\star^{C_2} S^{0,0}$ is torsion in the relevant degrees. Therefore, the classes $\rho^{1+\epsilon}\tau^{4(k+l)}\beta^l$ are not in the Hurewicz image. It follows that the classes $\rho^{1+\epsilon}\tau^{4k}\bar{v}_1^{4l}$ do not lie in the Hurewicz image of $k\bbR$ either.
    \end{itemize}
\end{proof}

In particular, \cref{thm:krhurewicz} gives the following families in the $RO(C_2)$-graded equivariant stable stems.

\begin{cor}[\cref{thm:mainmufamily}]\label{cor: four periodic families}
   ~\begin{enumerate}
    \item For $k\geq 0$, there are two families of elements in $\pi_\star^{C_2}S^{0,0}$:
    \begin{itemize}
        \item $\{\tau^{4k+1}P^l h_1\}_{l\geq 0}$ at degree $(8l+1, 4(l-k))$,
        \item $\{\frac{\theta}{\rho^{2}\tau^{4k}}x_{8l-1}\}_{l>0}$ at degree $(8l+1, 4(l+k+1))$,
    \end{itemize}
    whose underlying is the classical $\mu_{8l+1}$-family in $\pi_*^s$. 
    \item For $k, l\geq 0$, there are two families of elements in $\pi_\star^{C_2}S^{0,0}$:
    \begin{itemize}
        \item $\{\tau^{4k+2}P^l h_1^2\}$ at degree $(8l+2, 4(l-k))$,
        \item $\{\frac{\theta}{\rho\tau^{4k}}P^l h_1\}$ at degree $(8l+2, 4(l+k+1))$,
    \end{itemize}
    whose underlying is the classical $\mu_{8l+2}$-family in $\pi_*^s$.
\end{enumerate}    
\end{cor}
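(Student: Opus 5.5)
The corollary is essentially a repackaging of \cref{lem:combinetauh1} and \cref{lem:combine(tauh1)2}. The remaining content is to check that the underlying (restriction to $\{e\}$) classes are the classical $\mu$-family elements. I would proceed family by family, first producing the equivariant element and then identifying its image under $\Res^{C_2}_e:\pi_\star^{C_2}S^{0,0}\to\pi_*S^0$.

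For existence and nontriviality:
\begin{itemize}
\item The first family in (1) is \cref{lem:combinetauh1}(1) with $\epsilon=0$. The construction there is $p\circ\tau^{4(k-l)}\circ\tilde f$, and the Adams spectral sequence identifies it as $\{\tau^{4k+1}P^lh_1\}$ in degree $(8l+1,4(l-k))$.
\item The second family in (1) is \cref{lem:combinetauh1}(2) with $\epsilon=0$. Reindexing $k$ so that the degree is $(8l+1,4(l+k+1))$, the $\rho$-Bockstein comparison there names the element $\{\frac{\theta}{\rho^2\tau^{4k}}x_{8l-1}\}$ for $l>0$.
\item In (2), the first family is the product $\{\tau h_1\}\cdot\{\tau^{4k+1}P^lh_1\}$ from \cref{lem:combine(tauh1)2}(1).
\item The second family in (2) is $\{\tau h_1\}$ times the $\epsilon=1$ classes of \cref{lem:combinetauh1}(2), as in \cref{lem:combine(tauh1)2}(2). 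The Adams spectral sequence name $\{\frac{\theta}{\rho\tau^{4k}}P^lh_1\}$ comes from reading off the $\rho$-Bockstein $E_\infty$-page in that degree.
\end{itemize}
Nontriviality of all four families is immediate, since their $k\bbR$-Hurewicz images are nonzero by \cref{thm:krhurewicz}.

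For the underlying classes, I would use the commutative square relating the equivariant and classical Hurewicz maps. Restriction carries $k\bbR$ to $ku$, and on homotopy it sends $\bar v_1\mapsto v_1$, $\rho\mapsto 0$, and $\tau$ to a unit. Consequently:
\begin{itemize}
\item the forgetful images of the classes $\rho^{1+\epsilon}\tau^{4k}\bar v_1^{4l+1}$ vanish in $ku$, so $ku$ does not detect the underlying classes and a different argument is needed;
\item on the Adams side, restriction of $\mathbb R$-motivic/$C_2$ Ext to classical Ext sends $\tau\mapsto1$, $\rho\mapsto 0$, and $P\mapsto P$, so $\{\tau^{4k+1}P^lh_1\}\mapsto\{P^lh_1\}=\mu_{8l+1}$ and $\{\tau^{4k+2}P^lh_1^2\}\mapsto \{P^lh_1^2\}=\mu_{8l+2}$ (using $\eta\mu_{8l+1}=\mu_{8l+2}$).
\end{itemize}
I expect this argument to go through: the classical classes survive in the Adams spectral sequence, and the restriction map on $E_2$ is compatible with the map of spectral sequences. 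The only subtlety is that a nonzero $E_\infty$ class could in principle restrict to something of higher filtration. However, $P^lh_1$ and $P^lh_1^2$ are the only classes in their stems detecting $\mu$ modulo higher-filtration terms, and higher-filtration indeterminacy does not change the coset $\mu_{8l+1}$ up to image-of-$J$ choices.

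The main obstacle is the negative-cone families $\{\frac{\theta}{\rho^2\tau^{4k}}x_{8l-1}\}$ and $\{\frac{\theta}{\rho\tau^{4k}}P^lh_1\}$. Their names involve $\theta$ and negative powers of $\rho$, so the naive restriction of Ext names is meaningless. For these I would first use the construction: each element is $p\circ\tau^{-4(k+l+1)}\circ\tilde f$. I would then restrict along $\Res^{C_2}_e$, under which $S^{0,0}/a_\sigma^3$ becomes $S^0\vee S^{-1}\cdots$ split, namely $\Res(a_\sigma)=0$, so the cofiber is the underlying cells. The $\tau^4$-self map of \cite{behrensshah2020c2} restricts to a self-equivalence. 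Hence $\Res(p\circ\tau^{-4m}\circ\tilde f)$ agrees, up to the unit, with $\Res(p\circ\tilde f)=\Res\{\tau^{4l+1}P^lh_1\}=\mu_{8l+1}$, and the degree shift does not affect the underlying stem $8l+1$. For the second family in (2), I would then multiply by $\Res\{\tau h_1\}=\eta$ to obtain $\mu_{8l+2}$. If the splitting argument is delicate, the fallback is to observe that the top cell of $\Res(S^{0,0}/a_\sigma^3)$ carries the class through, and that $\mu_{8l+1}$ is the unique element of its stem detected by $ko$ with image $\eta v_1^{4l}$-type classes. One would then check via $\Res(k\bbR^{C_2}\text{-image})$ in $ko$, as in \cref{lem:kohurewicz}.
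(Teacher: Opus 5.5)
Reducing the corollary to \cref{lem:combinetauh1} and \cref{lem:combine(tauh1)2} is what the paper does. The paper says nothing about the underlying classes beyond the Adams names, so your restriction argument is an addition. For the two positive-cone families it is fine: you use the map of Adams spectral sequences with $\tau\mapsto 1$ and $\rho\mapsto 0$.

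There is one indexing slip. The second family in (2) is $\{\tau h_1\}$ times the $\epsilon=0$ classes $\frac{\theta}{\rho\tau^{4k+1}}\bar v_1^{4l}$ of \cref{lem:combinetauh1}(2), not the $\epsilon=1$ classes; this is what the proof of \cref{lem:combine(tauh1)2}(2) uses. Equivalently, it is $\{\tau h_1\}$ times the second family of (1), together with its $l=0$ analogue in degree $(1,4k+4)$. The $\epsilon=1$ classes sit in degree $(8l,4(l+k+1)-1)$, so the product you wrote lands in the wrong degree. Your later ``multiply by $\eta$'' step tacitly uses the correct version.

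The genuine gap is in the negative-cone families. There you assert that $\Res^{C_2}_e(p\circ\tau^{-4m}\circ\tilde f)$ agrees with $\Res^{C_2}_e(p\circ\tilde f)$ up to a unit.

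Underlying, $\Sigma^{2,3}S^{0,0}/a_\sigma^3$ does split, as $S^2\vee S^0$ with $p$ the projection to $S^0$. (So $S^{0,0}/a_\sigma^3$ becomes $S^0\vee S^{-2}$, not $S^0\vee S^{-1}$.) However, knowing that $\Res^{C_2}_e(\tau^{-4m})$ is a self-equivalence does not make it diagonal. The composite $p\circ\Res^{C_2}_e(\tau^{-4m})$, restricted along the bottom-cell summand $S^2$, is some $c\in\pi_2S^0=\bbZ/2\{\eta^2\}$. What you actually get is
\[
\Res^{C_2}_e(p\circ\tau^{-4m}\circ\tilde f)=\mu_{8l+1}+c\cdot f',
\]
where $f'\in\pi_{8l-1}S^0$ is the $S^2$-component of $\Res^{C_2}_e\tilde f$, which also depends on the chosen lift.

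Nothing a priori forces $c f'=0$. For $l=1$, $\eta^2$ times a generator of $\pi_7S^0$ is nonzero and detected by $h_1^2h_3$, so such a sum would not even be detected by $Ph_1$. You must do one of two things:
\begin{itemize}
\item Arrange $c=0$. For instance, model $\tau^4$ on $S^{0,0}/a_\sigma^3=\Sigma^{1-3\sigma}S(3\sigma)_+$ by the James-periodicity equivalence coming from an equivariant trivialization of $4\sigma$ over $S(3\sigma)$. Its underlying map is twisted by a map $S^2\to GL_4(\bbR)$, which is null since $\pi_2GL_4(\bbR)=0$. You would then still need to check that this model acts by $\tau^4$ on $k\bbR$-homology.
\item Settle for ``$\mu_{8l+1}$ modulo $\ker(\pi_*S^0\to\pi_*ko)$''. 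This does follow, since the $ko$-image of $c f'$ is a multiple of the $ko$-image of $f'$, which lies in $ko_{8l-1}=0$.
\end{itemize}

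Your fallback does not close the gap:
\begin{itemize}
\item $\Res^{C_2}_e k\bbR=ku$, where $\mu_{8l+1}$ maps to zero.
\item $ko=(k\bbR)^{C_2}$ sees fixed-point information, not underlying classes.
\item Even going through $ko_{C_2}$, whose underlying is $ko$, the $ko$-Hurewicz image determines the class only modulo exactly that kernel. So ``$\mu_{8l+1}$ is the unique element of its stem detected by $ko$'' is false.
\end{itemize}
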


\section{Some \texorpdfstring{$C_2$}{C2}-equivariant \texorpdfstring{$v_1$}{v1}-periodicity} \label{sec4}

The infinite families in \cref{cor: four periodic families} exhibit periodic behavior analogous to the classical $v_1$-periodicity associated with Adams' periodicity operator $P$. In this section, we provide a construction of an equivariant lift of the periodicity operator, and use it to construct a $C_2$-equivariant family. 

\subsection{A \texorpdfstring{$C_2$}{C2}-equivariant \texorpdfstring{$v_{1,0}$}{v1}-self map} \label{sec4.1}

We first establish the existence of a $v_{1,0}$-self map of degree $(8,4)$, in the sense of \cite{balmersanders2016spectrum, behrenscarlisle2025periodic, balderramahouzhang2026self}, on the equivariant 2-cell complex 
\[S^{0,0}/2: = \Cof(S^{0, 0} \overset{2}{\longrightarrow} S^{0,0}).\] 
Here, $2$ is detected by  $h_0+\rho h_1$ in the genuine $C_2$-equivariant Adams spectral sequence. Note that previously known examples of $C_2$-equivariant $v_{1,0}$-self maps constructed in \cite{bhattacharyaguillouli2022realizations, behrenscarlisle2025periodic, balderramahouzhang2026self} are mostly on $S^{0,0}/\bfh$, where $\bfh \in \pi_{0, 0}^{C_2}$ is detected by $h_0$.

\begin{prop}\label{prop:v1selfmap}
    There is a $C_2$-equivariant stable map 
    \[v: \Sigma^{8, 4}S^{0,0}/2 \to S^{0,0}/2,\]
     which is non-nilpotent and lifts the non-equivariant self map $v_1^4$ on $S^{0}/2$ constructed by Adams \cite{adams1966on}.
\end{prop}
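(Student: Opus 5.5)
We construct $v$ by obstruction theory, using the cofiber sequence $S^{0,0}\xrightarrow{2}S^{0,0}\xrightarrow{i}S^{0,0}/2\xrightarrow{\partial}S^{1,0}$. This is the equivariant analogue of Adams' classical construction of $v_1^4$ on $S^0/2$. We also use the fact that the Adams filtration and the $\rho$-Bockstein picture of $\pi_\star^{C_2}S^{0,0}$ near stems $8$ and $9$ are known from \cite{guillouisaksen2024c2}.

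\textbf{Step 1: choose the bottom class.} Classically $v_1^4$ is the extension over $S^0/2$ of a map $S^8\to S^0/2$ lifting $\mu_9$ through $\partial$ (equivalently, the coextension of $8\sigma$ / $P h_1$). Equivariantly we take the class $\{\tau P h_1\}\in\pi^{C_2}_{9,4}S^{0,0}$. This is the $k=0$, $l=1$ member of the family in \cref{lem:combinetauh1}(1). Its degree $(9,4)$ is exactly what a bottom-to-top component of a self map of degree $(8,4)$ on the two-cell complex requires: bottom cell in $(0,0)$, top cell in $(1,0)$.

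\textbf{Step 2: build the map.} A self map $\Sigma^{8,4}S^{0,0}/2\to S^{0,0}/2$ is obtained in two stages.
\begin{enumerate}
\item First we produce $g\colon S^{8,4}\to S^{0,0}/2$. It should satisfy $\partial\circ g=\{\tau Ph_1\}$ up to a unit, and should restrict to Adams' map. A lift through $i$ exists because $2\cdot\{\tau Ph_1\}=0$. Since $2$ is detected by $h_0+\rho h_1$, this reduces to checking that $(h_0+\rho h_1)\tau Ph_1$ is zero in $E_\infty$ with no hidden extension. Here $h_0\tau h_1=0$, and $\rho\tau h_1$ multiplied by $P$ can be handled via the Bockstein differentials \eqref{eq:rhobock}.
\item Then we extend $g$ over the top cell. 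This requires $g\circ 2=0$ in $\pi^{C_2}_{8,4}(S^{0,0}/2)$. The obstruction lives in the group $\pi_{8,4}^{C_2}(S^{0,0}/2)$, which we compute from the long exact sequence involving $\pi^{C_2}_{8,4}S^{0,0}$ and $\pi^{C_2}_{7,4}S^{0,0}$ (these are known from \cite{guillouisaksen2024c2}). The composite $g\circ 2$ equals $g\circ i\circ\partial$-type terms. Classically this vanishes because $2\cdot 1_{S^0/2}=i\eta\partial$ and $\eta$ times the relevant class dies. Equivariantly we would use that the identity of $S^{0,0}/2$ has order $4$ and that $2\cdot\mathrm{id}=i\circ\{\eta\text{-type class}\}\circ\partial$, with that class detected by $\tau h_1$ or $h_1$ plus a $\rho$-term. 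The obstruction is then a product of $\{\tau Ph_1\}$ with such a class. This product lands in a degree where, by \cite{guillouisaksen2024c2}, the relevant group vanishes or the product is $\rho$-divisible into zero.
\end{enumerate}
If the obstruction is nonzero, we alter $g$ by elements of $i_*\pi_{8,4}^{C_2}S^{0,0}$ to kill it.

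\textbf{Step 3: the underlying map and non-nilpotence.} Restriction to the underlying spectrum sends $\tau Ph_1$ to $Ph_1$, which is $\mu_9$. So $\Res^{C_2}_e v$ is an extension of a coextension of $\mu_9$, and this is Adams' $v_1^4$ up to the usual indeterminacy. Non-nilpotence follows formally: $\Res^{C_2}_e(v^n)=(v_1^4)^n$, and the latter is non-nilpotent because it induces an isomorphism on $K$-theory (Adams). One can also argue equivariantly: applying $k\bbR\wedge-$, the composite $\partial\circ v^n\circ i$ has Hurewicz image $\rho\tau^{0}\bar v_1^{4n+1}$-type classes, nonzero by \cref{lem:combinetauh1}.

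\textbf{Main obstacle.} The hardest part is the top-cell extension obstruction in Step 2(b). It requires knowing $\pi^{C_2}_{8,4}$ and $\pi^{C_2}_{7,4}$ of the sphere, together with hidden extensions by $2=h_0+\rho h_1$. That information will be extracted from the $\rho$-Bockstein charts.
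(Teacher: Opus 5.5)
There is a genuine gap, and it is a degree error at the very first step. For a self map $v\colon\Sigma^{8,4}S^{0,0}/2\to S^{0,0}/2$, the bottom-to-top component is the composite
$S^{8,4}\hookrightarrow\Sigma^{8,4}S^{0,0}/2\xrightarrow{v}S^{0,0}/2\xrightarrow{\partial}S^{1,0}$.
It lies in $\pi^{C_2}_{7,4}$, not in $\pi^{C_2}_{9,4}$.

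This breaks each step:
\begin{itemize}
\item Your $g\colon S^{8,4}\to S^{0,0}/2$ cannot satisfy $\partial\circ g=\{\tau Ph_1\}$.
\item The obstruction $i\circ\eta'\circ\{\tau Ph_1\}$ from your Step 2(b) would sit in degree $(10,4)$, not in $\pi^{C_2}_{8,4}(S^{0,0}/2)$.
\end{itemize}

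The confusion is already present in your classical description. Adams' $v_1^4$ is the extension of a coextension of $8\sigma\in\pi_7$, which is detected by $h_0^3h_3$. The construction uses $2\cdot 8\sigma=0$ and $\langle 2,8\sigma,2\rangle\ni 8\sigma\eta=0$. The class $\mu_9$ (detected by $Ph_1$) is an \emph{output} of the construction, namely $\partial\circ v_1^4\circ\tilde\eta\in\langle\eta,2,8\sigma\rangle$. It is not the input, so ``the coextension of $8\sigma$ / $Ph_1$'' conflates two different classes in different stems.

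Consequently Step 3 is not established either. You have not shown that $\Res^{C_2}_e v$ is Adams' map, and you deduce non-nilpotence from that identification. Your alternative $k\bbR$ argument has the same problem: $\partial\circ v^n\circ i$ lies in degree $(8n-1,4n)$, while $\rho\bar v_1^{4n+1}$ lies in $(8n+1,4n)$. The relevant composite is $\partial\circ v^n\circ\widetilde{\{\tau h_1\}}$, as in \cref{prop:PC2}.

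What you need is the following:
\begin{itemize}
\item a class $x\in\pi^{C_2}_{7,4}$ with $\Res^{C_2}_e x=8\sigma$;
\item a check that $x$ is annihilated by $2$, where $2$ is detected by $h_0+\rho h_1$;
\item a proof that $\langle 2,x,2\rangle$ contains zero.
\end{itemize}

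The paper does this as follows. It takes $x=\{h_0^3h_3+\rho^2h_1c_0\}$. The $\rho^2h_1c_0$ term matters because it interacts with the $\rho h_1$ summand of $2$, giving $2x=\{h_0^4h_3+\rho^3h_1^2c_0\}=0$. It obtains $\langle 2,x,2\rangle=0$ from $\Ext_{C_2}^{8,\geq 5,4}=0$. It then invokes \cite[Lemma~2.0.1]{balderramahouzhang2026self} to produce $v$, which is a $KU_{C_2}$-isomorphism and restricts to Adams' $v_1^4$.

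Your Step 2(b) mechanism is exactly this Toda-bracket argument once you re-index it to $x\in\pi^{C_2}_{7,4}$: write $2\cdot\id_{S^{0,0}/2}=i\circ\eta'\circ\partial$, so the obstruction is $i(\eta'x)$, which you may alter by $i_*\pi^{C_2}_{8,4}$. With that indexing the obstruction group really is $\pi^{C_2}_{8,4}(S^{0,0}/2)$. What is missing is the correct choice of $x$ and the verification of $2x=0$ and of the bracket vanishing in that degree.
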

\begin{proof}
    Consider the element $\{h_0^3h_3+\rho^2h_1c_0\} \in \pi_{7, 4}^{C_2}$. By \cite{guillouisaksen2024c2}, 
\[\Res^{C_2}_e(\{h_0^3h_3+\rho^2h_1c_0\})=\Res^{C_2}_e(\{h_0^3h_3\})=8\sigma,\]
and
\begin{align*}
    2\cdot \{h_0^3h_3+\rho^2h_1c_0\} &= \{h_0+\rho h_1\}\cdot \{h_0^3h_3+\rho^2h_1c_0\}\\
    &=\{h_0^4h_3+ \rho^3h_1^2c_0\}\\
    &=0.
\end{align*}
Moreover, since $\Ext_{C_2}^{8, \geq 5, 4}=0$, we have
\[
    \langle 2, \{h_0^3h_3+\rho^2h_1c_0\}, 2\rangle =0. \]
    
    By \cite[Lemma~2.0.1]{balderramahouzhang2026self}, there is a map $v$ filling in
\[\begin{tikzcd}
S^{8,4} \arrow[d, hook] \arrow[rr, " \{h_0^3h_3+\rho^2h_1c_0\}"] &  & S^{1,0}                       \\
\Sigma^{8,4} S^{0,0}/2 \arrow[rr, "v"]                                &  & S^{0,0}/2 \arrow[u, two heads]
\end{tikzcd}\]
such that $v$ induces an isomorphism on $KU_{C_2}$, and thus is non-nilpotent. Moreover, applying $\Phi^e$ to the above diagram recovers the classical $v_1^4$-self map on $S^{0}/2$ by \cite{adams1966on}.
\end{proof}

\subsection{A $C_2$-equivariant Adams periodicity operator}
For any $\alpha\in \pi_{s, w}^{C_2}$ that is $2$-torsion, let $\widetilde{\alpha}$ denote a chosen lift of $\alpha$ over $S^{0, 0}/2$; in other words, $\widetilde{\alpha}$ is a lift in the following diagram
\[\begin{tikzcd}
                                              & {S^{s+1, w}} \arrow[d, " \alpha"] \arrow[ld, "\widetilde{\alpha}"', dashed] &            \\
{S^{0, 0}/2} \arrow[r, "\partial", two heads] & {S^{1,0}} \arrow[r, "2"]                                         & {S^{1, 0}}
\end{tikzcd}\]
where $\partial: S^{0,0}/2 \to S^{1,0}$ is the natural projection map. 

\begin{construction}\label[construction]{constr:P}
    For a 2-torsion class $\alpha\in \pi_{s, w}^{C_2}$ and $l\geq 0$, define $P_{C_2}^l\alpha\in \pi_{s+8l, w+4l}^{C_2}$ to be the stable class represented by the composite
\begin{equation} \label{eq:operatorP}
    P_{C_2}^l\alpha: S^{s+1+8l, w+4l} \xrightarrow{\Sigma^{8l, 4l} \widetilde{\alpha}} \Sigma^{8l, 4l}S^{0,0}/2 \xrightarrow{v^l} S^{0,0}/2 \xrightarrow{\partial} S^{1,0},
\end{equation}
where $v$ is the map constructed in \cref{prop:v1selfmap}. Note that here the construction of $P_{C_2}^l \alpha$ depends on the choice of $\widetilde\alpha$, but this will not affect \cref{prop:PC2}.
\end{construction}

The operator $P_{C_2}$ defined in \cref{constr:P} matches the notation $``P"$ used by \cite{guillouisaksen2024c2} in the genuine $C_2$-equivariant Adams spectral sequence, as illustrated in the following example.

\begin{prop}\label{prop:PC2}
    For every $l\geq 0$, the class $P_{C_2}^l\{\tau h_1\}$ is essential and is detected by $\tau P^l h_1$ in the genuine Adams spectral sequence. In particular, it has nontrivial $k\bbR$-Hurewicz image
    \[
        \iota_{k \bbR}(P_{C_2}^l\{\tau h_1\}) = \rho \overline{v}_1^{4l+1}.
    \]
\end{prop}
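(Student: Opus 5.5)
The plan is to compute the $k\bbR$-Hurewicz image of $P_{C_2}^l\{\tau h_1\}$ directly, using that everything in \cref{constr:P} can be smashed with $k\bbR$. Essentiality then follows from nontriviality of the image. The Adams detection statement follows by comparing with the possible detecting classes in $\Ext_{C_2}$, exactly as in \cref{lem:combinetauh1}.

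First, $\{\tau h_1\}$ is $2$-torsion, since its image $\rho\bar v_1$ in $k\bbR$ is $2$-torsion and the group $\pi^{C_2}_{1,0}$ is detected appropriately. So a lift $\widetilde{\alpha}: S^{2,0}\to S^{0,0}/2$ exists. Apply $k\bbR\wedge-$ to the composite \eqref{eq:operatorP}. The key input is the effect of $v$ on $k\bbR\wedge S^{0,0}/2 = k\bbR/2$. By the proof of \cref{prop:v1selfmap}, $v$ induces an isomorphism on $KU_{C_2}$, and its underlying map is Adams' $v_1^4$. I would argue that $k\bbR\wedge v$ is, up to a unit (and possibly up to terms of higher filtration that vanish in the relevant degrees), multiplication by $\bar v_1^4$ on $k\bbR/2$. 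To do this, compare $\pi_\star^{C_2}(k\bbR/2)$ in degree $(8,4)$ with the long exact sequence for $2$ on $\pi_\star^{C_2}k\bbR$ from \cref{thm: ku_R computation}. The relevant group is generated by the reduction of $\bar v_1^4$, possibly plus $\rho$-multiples that are excluded by weight, and the restriction to the underlying $ku/2$ must be $v_1^4$.

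Granting this, the Hurewicz image of $P^l_{C_2}\{\tau h_1\}$ is $\partial(\bar v_1^{4l}\cdot \iota(\widetilde\alpha))$. Since $\partial$ is $k\bbR$-linear and $\partial\iota(\widetilde\alpha) = \iota(\{\tau h_1\}) = \rho\bar v_1$, we get $\rho\bar v_1^{4l+1}$. This is nonzero by \cref{thm: ku_R computation}, since $\bar v_1\rho^3$ is the only relation killing $\rho$-multiples. Therefore $P^l_{C_2}\{\tau h_1\}$ is essential. The choice of $\widetilde\alpha$ only matters up to $\iota(S^{2,0})$-images hit by the inclusion of the bottom cell. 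Such changes alter the result by elements of the form $\partial(v^l\circ i\circ\beta)$. After smashing with $k\bbR$, these become $2$-divisible classes or classes in the image of $\partial\circ i=0$, so the Hurewicz image is independent of the choice.

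Finally, for detection, the class lies in degree $(8l+1,4l)$ and maps to $\rho\bar v_1^{4l+1}$. By \cref{lem:combinetauh1}(1) with $k=0,\epsilon=0$, this is the image of $\{\tau P^lh_1\}$. Comparing with the genuine $C_2$-equivariant Adams $E_\infty$-page from \cite{guillouisaksen2024c2} in this bidegree, every other permanent cycle in that stem and weight either has zero $k\bbR$-image or maps to a different class. Hence $P_{C_2}^l\{\tau h_1\}$ is detected by $\tau P^lh_1$, modulo possible higher-filtration indeterminacy that does not change the detecting class.

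The main obstacle is identifying $k\bbR\wedge v$ with multiplication by $\bar v_1^4$ on $\pi_\star(k\bbR/2)$ in the needed degrees. I would first try to use the commutative square defining $v$ together with the fact that $\{h_0^3h_3+\rho^2h_1c_0\}$ maps to $2\bar v_1^4$-type data (the $8\sigma$ restriction) under the connecting map of $k\bbR/2$. If that fails, I would check the action on the generator $\iota(\widetilde\alpha)$ alone by restriction to the underlying $ku/2$ and to geometric fixed points.
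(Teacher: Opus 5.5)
Your computation of the Hurewicz image is correct in outline, but it takes a different route from the paper.

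\textbf{How the two routes differ.} The paper never analyzes $k\bbR\wedge v$ equivariantly. It proceeds in three steps.
\begin{enumerate}
\item It uses the Wood cofiber sequence for $ko/2$ to show that the underlying class of $\iota_{k\bbR}(\widetilde\alpha)$ is $v_1\in\pi_2(ku/2)$. Hence the underlying class of $\iota_{k\bbR}(v^l\circ\widetilde\alpha)$ is $v_1^{4l+1}\neq 0$.
\item Since $\pi^{C_2}_{8l+2,4l}k\bbR=\bbF_2\{\rho^2\bar{v}_1^{4l+2}\}$ restricts to zero, $\iota_{k\bbR}(v^l\circ\widetilde\alpha)$ is not reduced from $k\bbR$, so its boundary is nonzero.
\item The group $\pi^{C_2}_{8l+1,4l}k\bbR=\bbF_2\{\rho\bar{v}_1^{4l+1}\}$ then pins down the image.
\end{enumerate}
That argument only needs $\Res^{C_2}_e\{\tau h_1\}=\eta$ and the sparseness of $\pi^{C_2}_\star k\bbR$ in two degrees. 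You instead treat $k\bbR\wedge v$ as a $k\bbR$-module map and use $k\bbR$-linearity of $\partial$ together with $\iota_{k\bbR}\{\tau h_1\}=\rho\bar{v}_1$. Once completed, your route gives the more general formula $\iota_{k\bbR}(P^l_{C_2}\alpha)=\bar{v}_1^{4l}\,\iota_{k\bbR}(\alpha)$ for every $2$-torsion $\alpha$ and every lift $\widetilde\alpha$. That makes independence of choices automatic and would also serve the paper's closing remark about the other families. (Separately, $2\{\tau h_1\}=0$ does not follow from its image being $2$-torsion. Use $\pi^{C_2}_{1,0}\cong(\bbZ/2)^3$ from tom Dieck splitting, or cite Guillou--Isaksen as the paper does.)

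\textbf{The key identification is overstated.} $k\bbR$-module maps $\Sigma^{8,4}k\bbR/2\to k\bbR/2$ correspond to $[\Sigma^{8,4}S^{0,0}/2,k\bbR/2]$, a group of order $4$. Besides the bottom-cell part $\pi^{C_2}_{8,4}(k\bbR/2)=\bbF_2\{\bar{v}_1^4\}$, which restriction to $ku/2$ does detect, there is a top-cell part coming from $\pi^{C_2}_{9,4}(k\bbR/2)=\bbF_2\{\rho\bar{v}_1^5\}$. So a priori $k\bbR\wedge v=\bar{v}_1^4+i\circ\rho\bar{v}_1^5\circ\partial$. The extra term does not ``vanish in the relevant degrees'': on $\iota_{k\bbR}(\widetilde\alpha)$ it gives $i(\rho^2\bar{v}_1^6)\neq 0$ in $\pi^{C_2}_{10,4}(k\bbR/2)$.

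What saves you is $\partial\circ i=0$. You only need $\partial\circ(k\bbR\wedge v)=\bar{v}_1^4\circ\partial$, which iterates to $\partial\circ(k\bbR\wedge v)^l=\bar{v}_1^{4l}\circ\partial$. A clean proof runs as follows.
\begin{itemize}
\item The class $\partial\circ v\circ i=\{h_0^3h_3+\rho^2h_1c_0\}$ maps to $\pi^{C_2}_{7,4}k\bbR=0$, not to ``$2\bar{v}_1^4$-type data''.
\item Hence $\partial\circ(k\bbR\wedge v)$ factors as $c\,\bar{v}_1^4\circ\partial$ for some $c\in\bbZ$, since $\pi^{C_2}_{8,4}k\bbR=\bbZ\{\bar{v}_1^4\}$.
\item The integer $c$ is odd, by restricting to Adams' map on $ku/2$.
\end{itemize}

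\textbf{The detection step is a genuine gap.} Knowing $\iota_{k\bbR}(P^l_{C_2}\{\tau h_1\})=\rho\bar{v}_1^{4l+1}$ determines the class only modulo $\ker\iota_{k\bbR}$. That kernel contains classes of lower Adams filtration, so your comparison of images cannot force detection by $\tau P^lh_1$.

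Concretely, take $l=1$ and $g=\{\tau h_1\}\{h_3\}\in\pi^{C_2}_{8,4}$.
\begin{itemize}
\item The class $g\{\tau h_1\}$ maps to zero in $k\bbR$, because $\pi^{C_2}_{7,4}k\bbR=0$. It restricts to $\eta^2\sigma$, so its Adams filtration is at most $3$.
\item Since $2g=0$, $g$ lifts to $\widetilde g\colon S^{9,4}\to S^{0,0}/2$. Then $v+\widetilde g\circ\partial$ fills the same square as $v$ in the proof of \cref{prop:v1selfmap}.
\item This replacement changes $P_{C_2}\{\tau h_1\}$ by exactly $g\{\tau h_1\}$. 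So for one of the two choices, the class has Adams filtration at most $3$ while having the same $k\bbR$-Hurewicz image.
\end{itemize}
Thus no argument using only the $k\bbR$-image can settle detection, and the claim depends on the choice of $v$. You need a lower bound on the Adams filtration of $\partial\circ v^l\circ\widetilde\alpha$. For instance, choose $v$ of filtration $4$; since $\widetilde\alpha$ has positive filtration, the composite then has filtration at least $4l+1$. Only after that should you compare with the $E_\infty$-page in filtrations at least $4l+1$. The paper handles this step only by citing \cref{lem:combinetauh1}, so it does not supply this argument either.
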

\begin{proof}
    Let $\alpha = \{ \tau h_1 \} \in \pi_{1,0}^{C_2}$. By \cite{guillouisaksen2024c2},
    \[\alpha\cdot 2=\alpha\cdot \{h_0\}+\alpha\cdot\{\rho h_1\}=0.\]
    
    The following commutative diagram is immediate:
    \[
        \begin{tikzcd}
            S^{8l+2, 4l} \arrow[r, "\widetilde{\alpha}"] \arrow[d, "\iota_{k\bbR} \wedge \id "] & \Sigma^{8l, 4l}S^{0,0}/2 \arrow[r, "v^{l}"] \arrow[d, "\iota_{k\bbR} \wedge \id "] & S^{0,0}/2 \arrow[r, "\partial"] \arrow[d, "\iota_{k\bbR} \wedge \id "] & S^{1,0} \arrow[d, "\iota_{k\bbR} \wedge \id "] \\
            k\bbR \wedge S^{8l+2, 4l} \quad \arrow[r, "\id \wedge \widetilde{\alpha}"] & \quad k\bbR \wedge \Sigma^{8l, 4l}S^{0,0}/2 \arrow[r, "\id \wedge v^{l}"] & k\bbR \wedge S^{0,0}/2 \arrow[r, "\id \wedge \partial"] & k\bbR \wedge S^{1,0}.
        \end{tikzcd}
    \]
    It suffices to show the composite
    \[
        \iota_{k\bbR}(P_{C_2}^l\alpha) = \iota_{k\bbR}( \partial \circ v^{l} \circ \widetilde{\alpha}) \neq 0,
    \]
    Non-equivariantly, smashing the Wood cofiber sequence
    \[
        \Sigma ko \xrightarrow{\eta} ko \xrightarrow{c} ku
    \]
    with $S^0/2$, we have a long exact sequence
    \begin{equation} \label{eq: woodexactsequence}
        \pi_1(ko/2)\xrightarrow{\eta} \pi_2 (ko/2) \xrightarrow{c_{*}} \pi_2 (ku/2) \to \pi_0(ko/2) \xrightarrow{\eta} \pi_{1}(ko/2).
    \end{equation}
    Here $\eta\in\pi_1^s$ is the non-equivariant stable Hopf map, and $c$ is the non-equivariant complexification map corresponding to \eqref{eq: kotokR}. The relevant homotopy groups are
    \[
        \pi_2(ko/2) \cong \bbZ/4, \quad \pi_1(ko/2) \cong \bbF_2 \{ \eta \}, \quad \pi_0(ko/2) \cong \bbF_2 \{ 1 \}, \quad \pi_2(ku/2) \cong \bbF_2 \{ v_1 \}.
    \]
    Thus, $\pi_0(ko/2) \xrightarrow{\eta} \pi_{1}(ko/2)$ is an isomorphism, and $c_*$ in \eqref{eq: woodexactsequence} is surjective. 
    
    Moreover, since $\Res^{C_2}_e(\alpha)=\eta$, by construction $\partial \circ \Res_e^{C_2}(\widetilde{\alpha}) = \Res_e^{C_2}(\partial \circ \widetilde{\alpha}) = \eta$. As a consequence, for the composite,
    \[
        S^2 \xrightarrow{\Res_e^{C_2}(\widetilde{\alpha})} S^0/2 \xrightarrow{\iota_{ko} \wedge \id} ko/2,
    \]
    we have $\partial_* \left [ (\iota_{ko} \wedge \id) \circ \Res_e^{C_2}(\widetilde{\alpha}) \right ] = \eta \neq 0$. Since \[\partial_*: \pi_2(ko/2) =\bbZ/4 \to \pi_1 ko = \bbZ/2\]
    is a surjection, it follows that $(\iota_{ko} \wedge \id) \circ \Res_e^{C_2}(\widetilde{\alpha}) \in \pi_2(ko/2)$ must be a generator of $\bbZ/4$, and therefore its image under $c_*$ is nonzero. Therefore,
    \[
        (\iota_{ku} \wedge \id) \circ \Res_e^{C_2}(\widetilde{\alpha}) = v_1 \neq 0.
    \]
    Since $\Res_e^{C_2}(v)$ recovers the classical $v_1^4$-self map, we have
    \begin{equation}\label{eq:proofin4}
        \Res_e^{C_2} \left [ (\iota_{k\bbR} \wedge \id) \circ v^l \circ \widetilde{\alpha} \right ] = v_1^{4l+1} \neq 0.
    \end{equation}

    It follows that $\iota_{k\bbR}(v^l \circ \widetilde{\alpha})\in \pi_{8l+2, 4l}^{C_2} k\bbR/2$ is nonzero. Consider the long exact sequence
    \[\pi_{8l+2, 4l}^{C_2} k\bbR \longrightarrow \pi_{8l+2, 4l}^{C_2} k\bbR/2 \overset{\partial_*}{\longrightarrow} \pi_{8l+1, 4l}^{C_2}k\bbR.\]
    Since $\pi_{8l+2, 4l}^{C_2} k\bbR =\bbF_2\{\rho^2 \bar{v}_1^{4l+2}\}$, and $\Res^{C_2}_e(\rho^2 \bar{v}_1^{4l+2})=0$, \eqref{eq:proofin4} and exactness implies
    \[\iota_{k\bbR}(v^l \circ \widetilde{\alpha}) \not\in \im(\pi_{8l+2, 4l}^{C_2} k\bbR \to \pi_{8l+2, 4l}^{C_2} k\bbR/2).\]
    Therefore,
     \[\partial_*(\iota_{k\bbR}(v^l\circ \widetilde{\alpha}))=\iota_{k\bbR }(\partial \circ v^l\circ \widetilde{\alpha})\] 
     must be nontrivial. For degree reasons and \cref{thm: ku_R computation},
    \[
        \iota_{k \bbR}(P_{C_2}^l\{\tau h_1\}) = \rho \overline{v}_1^{4l+1}.
    \] 

    By \cref{lem:combinetauh1}, $P_{C_2}^l\{\tau h_1\}$ is detected by $\tau P^l h_1$ in the genuine $C_2$-equivariant Adams spectral sequence.
\end{proof}

\begin{rmk}
    Using a $\tau^4$-periodic operator on $S^{0, 0}/a_\sigma^3$ as discussed in the proof of \cref{lem:combinetauh1}, the other three families in \cref{cor: four periodic families} can be explicitly constructed. The proof is similar to that of \cref{prop:PC2}.
\end{rmk}

\begingroup
\raggedright
\bibliography{refs}
\bibliographystyle{alpha}
\endgroup

\end{document}